\newtheorem{theorem}{Theorem}[section]
\newtheorem{lemma}[theorem]{Lemma}
\theoremstyle{definition}
\newtheorem{example}[theorem]{Example}
\newtheorem{proposition}[theorem]{Proposition}
\newtheorem{corollary}[theorem]{Corollary}
\theoremstyle{remark}
\newtheorem{remark}[theorem]{Remark}
\numberwithin{equation}{section}
\newcommand{\R}{\mathbb{R}}
\newcommand{\conv}{\operatorname{conv}}
\newcommand{\Rn}{\mathbb{R}^{n}}
\newcommand{\Sn}{S^{n-1}}
\newcommand{\On}{\mathrm{O}(n)}
\newcommand{\SOn}{\mathrm{SO}(n)}
\newcommand{\Lp}{L_{p}}
\newcommand{\la}{\langle}
\newcommand{\ra}{\rangle}
\newcommand{\clG}{\overline{G}}
\newcommand{\KG}{\mathcal{K}_{G}}
\begin{document}
	
	\title{The  $L_{p}$ Dual  Minkowski  Problem for Group-Invariant Convex Bodies  }
	
	\author{Junjie Shan }
	
	\address{School of Mathematics, Sichuan University, Chengdu, Sichuan, 610064, P. R. China}

	\email{shanjjmath@163.com 
	}

	\thanks{{\it 2010 Math Subject Classifications}:  52A40, 52A38}
	
	\thanks{{\it Keywords}:   $L_{p}$ dual Minkowski problem, $(p, q)$-th dual curvature measure, group-invariant convex bodies, irreducibility}

	\begin{abstract}
	In this paper, we study the $L_p$ dual Minkowski problem for all $q, p \in \mathbb{R}$ from an algebraic perspective. We establish the existence of solutions for group-invariant convex bodies (not necessarily origin-symmetric), thereby covering three fundamental problems as special cases: the $L_p$ Minkowski problem ($q = n$), the $L_p$ Aleksandrov problem ($q = 0$), and the dual Minkowski problem ($p = 0$).
	\end{abstract}
	
	\maketitle
	   
	  \section{Introduction} 
	  The central problem in convex geometry involves characterizing geometric measures  of convex bodies. The classical Minkowski problem seeks necessary and sufficient conditions for a given Borel measure on the unit sphere to be the surface area measure of a convex body. This is an influential problem not only in geometric analysis but also in fully nonlinear partial differential equations.
	  

	  Since the classical Minkowski problem, many extensions have been introduced and extensively studied. The $L_p$ Minkowski problem, which characterizes the $L_p$ surface area measure, was first introduced by Lutwak \cite{Lutwak1993,Lutwak1996}. When $p = 1$, the $L_p$ Minkowski problem reduces to the classical Minkowski problem.  The $L_p$ Minkowski problem has been solved for $p\ge1$ (see  \cite{lyz2004,Lutwak1993,L-O 1995}). However, for $p<1$, the family of $L_p$ Minkowski problems contains many important unsolved cases, notably the logarithmic Minkowski problem ($p=0$) and the centro-affine Minkowski problem ($p=-n$), which remain significant open problems, though partial results appear in \cite{BHZ2016,CLZ2019,JJZ2016,LW2013,zhu2014,zhu2015}.  Particularly for $p<0$, little is known, with progress mainly in discrete cases (\cite{zhu2015,zhu2017}) or absolutely continuous cases: results for $-n<p<0$ with densities in $L_{\frac{n}{n+p}}(S^{n-1})$ \cite{B-B-A-Y 2019,chou-wang}, and results under regularity assumptions for $p<-n$ \cite{GLW}. Some affine isoperimetric inequalities were obtained by the solution of the $\Lp$ Minkowski problem, see \cite{CGlpine,HFGlpine,LYZ2000lp,lyz2002}.
	  
	  In the groundbreaking work \cite{HLYZdual}, Huang-LYZ introduced a new family of geometric measures called dual curvature measures, which can be viewed as differentials of dual quermassintegrals. The dual Minkowski problem, which characterizes the dual curvature measure, remains widely open. While this problem is completely solved for even data when $0 \leq q \leq n$,  little is known about the case when $q > n$.  When 
	  $q=0$, the $0$-th dual curvature measure corresponds to the well-known integral curvature measure. Its $L_p$ generalization, namely the $L_p$ integral curvature measure, and the corresponding $L_p$ Aleksandrov problem, were posed in \cite{HLYZ2018}.
	   For progress  on this question, see \cite{BHP2018,BLYZZ2019,chenli 2018,EH2023,HLXZ,lsw,Lyzdualcone,zhao2017,zhao2018}.

	  The $(p, q)$-th dual curvature measure, introduced in \cite{lyz2018}, unifies the aforementioned $L_p$ surface area measure, the $L_p$ integral curvature measure, and the dual curvature measure, thereby establishing a novel connection between these previously disparate concepts. Consequently, the corresponding Minkowski problems for these geometric measures are also unified. The $\Lp$ dual Minkowski problem concerns prescribing the
	    $(p, q)$-th dual curvature measure $\widetilde{C}_{p,q} (K, Q, \cdot) $ of a convex body $K$ relative to a star body $Q$ in $\Rn$ (see Section 2 for a precise definition). This problem is formally stated as follows:
	    
	    \textit{Given a nonzero finite Borel measure $\mu$ on the unit sphere $S^{n-1}$, a star body $Q$, and real numbers $p,q$, what are necessary and sufficient conditions  for the existence of a convex body $K$ satisfying 
	    	$\mu = \widetilde{C}_{p,q}(K, Q, \cdot)$?}
	  
	  When the  measure $\mu$ has a density function $f$, the aforementioned $L_p$ dual Minkowski problem reduces to the following Monge-Amp\`{e}re type equation:
	  \begin{equation}\label{continuous}
	  	\det(\nabla_{ij} h + h \delta_{ij}) = h^{p-1} \rho_{Q}^{q-n}(\nabla h) f,
	  \end{equation}
	  where $\rho_{Q}$ is the radial function of $Q$, $h$ is the unknown support function, $\delta_{ij}$ denotes the Kronecker delta, $\nabla h$ is the Euclidean gradient and $\nabla_{ij}h$ is the Hessian of $h$ on $S^{n-1}$.

	  
	   The case $p > 0$, $q < 0$ of the $\Lp$ dual Minkowski problem,   similar to the classical Minkowski problem, was solved by Huang-Zhao \cite{huangzhao dual}. They also studied the case for even measures $\mu$ when $p, q > 0$ with $p \neq q$.   B\"or\"oczky-Fodor \cite{BF2019} extended these even solutions to the general case when $p > 1$, $q > 0$ and $p \neq q$.  Chen-Li \cite{chenli flow} proved the case for $p > 0$, $q \neq n$ using  Gauss curvature flow.  
	   For further references,  see \cite{ai lpdual,liwan2025,LLL2022,LP2021,mui2022}.

	  The case $ q>0, p\le0$  of the $\Lp$ dual Minkowski problem is more challenging and remains poorly understood. 
	  Even for the special cases mentioned earlier, the problem is widely open. For example, when $q=n$, the $(p, n)$-th dual curvature measure $\widetilde{C}_{p,n}(K, Q, \cdot)$ equals the $\Lp$ surface area measure (up to a factor $n$). In this setting, the $\Lp$ dual Minkowski problem reduces to the $\Lp$ Minkowski problem, which includes both the  logarithmic Minkowski problem (solved in the even case by B\"or\"oczky-LYZ \cite{KLYZ2013JAMS}) and the centro-affine Minkowski problem. 
	  When $p=0$, the $\Lp$ dual Minkowski problem coincides with the dual Minkowski problem, and remains largely open for $q > n$.

	  For the  challenging case  $q>0, p\le 0$  of the $\Lp$ dual Minkowski problem,   Mui \cite{mui2024} solved the range $-1 < p < 0$ with $q < 1 + p$ and $p \neq q$ for even data. For $q>n-1$ and some negative $p$, Guang-Li-Wang \cite{GLW2023 flow} established existence of solutions under regularity assumptions.  Very recently, from a group-symmetry perspective, B\"or\"oczky-Kov\'{a}cs-Mui-Zhang \cite{BKMZ group} solved the continuous $\Lp$ dual Minkowski problem \eqref{continuous} for $q>0$, $-q^{*}<p<0$, where the density $f$ is $G$-invariant  with $f\in L^{s}(\Sn) $ for specified $s$. Here, $G$ is a closed subgroup of $\On$ without non-zero fixed point, and $q^*$ denotes a $q$-dependent constant.

	  	This paper aims to solve the $\Lp$ dual Minkowski problem  from an algebraic perspective, combining convex geometry and group representation theory.  We establish the existence of solutions to the $L_p$ dual Minkowski problem for all $q ,p \in \mathbb{R}$  in the setting of
	  	 group-invariant measures and  group-invariant convex bodies.   Thus, the Minkowski problem serves as a bridge connecting algebra, geometry, and analysis.
	  	As noted, this parameter range encompasses several  major open problems.

We recall fundamental notions from convex geometry.	    Let $\mathcal{K}^n$ denote the set of convex bodies in $\mathbb{R}^n$, 
	    $\mathcal{K}_o^n$ the set of convex bodies in $\mathbb{R}^n$ containing the origin in their interior, 
	    and $\mathcal{K}_e^n$ the set of origin-symmetric convex bodies in $\mathbb{R}^n$. 
	    Let $\mathcal{B}^n$ denote the class of Euclidean balls centered at the origin in $\mathbb{R}^n$.

	    	Given a subgroup $G$ of the orthogonal group $\On$,
	    we denote by $\mathcal{K}_{G}$ the collection of all \textit{$G$-invariant convex bodies}, i.e.,
	    \begin{equation*}
	    	\mathcal{K}_{G} := \left\{ K \in \mathcal{K}^n : gK = K \text{ for all } g \in G \right\}.
	    \end{equation*}
	   A Borel measure $\mu$ on $S^{n-1}$ is \textit{$G$-invariant} if $
	   \mu(gE) = \mu(E)$ for all $g \in G$ and any Borel set  $E \subset S^{n-1}$.

	   Group-invariant convex bodies are the primary focus of this paper. We first characterize relationships between $\mathcal{K}_G$ and fundamental classes of convex bodies in convex geometry, establishing a complete classification in Section \ref{classification}. For example, 	let $ G $ be a  subgroup of  $ \On $.	Then, $\mathcal{K}_{G} \subset \mathcal{K}^{n}_{e}$ if and only if 	$-x \in \overline{Gx}$ for all $x\in \Rn$; $\mathcal{K}_{G} \subset \mathcal{K}^{n}_{o}$ if and only if $G$ has no non-zero fixed points  in $\Rn$. Moreover, we have:
	   
	   

	   \begin{table}[h]
	   	\centering
	   	\begin{tabular}{|c|c|c|c|c|}
	   		\hline
	   		 \diagbox[width=5.5em, height=1.8em]{}{$\mathcal{C}$} & $\mathcal{B}^n$ & $\mathcal{K}_e^n$ & $\mathcal{K}_o^n$ & $\mathcal{K}^n$ \\ 
	   		\hline
	   		$\mathcal{K}_{G} = \mathcal{C}  \iff$ & 
	   		\begin{minipage}{0.18\textwidth}
	   			\vspace{0.2em}
	   			$\overline{Gv}=\Sn$ for all  $v\in S^{n-1}$
	   			\vspace{0.2em}
	   		\end{minipage} & 
	   		\begin{minipage}{0.18\textwidth}
	   			$G = \{\pm I\}$
	   		\end{minipage} & 
	   		\begin{minipage}{0.18\textwidth}
	   			No such $G$
	   		\end{minipage} &
	   		\begin{minipage}{0.18\textwidth}
	   			$G=\{I\}$
	   		\end{minipage} \\ 
	   		\hline
	   		$\mathcal{K}_{G} \subset \mathcal{C} \iff$ & 
	   		\begin{minipage}{0.18\textwidth}
	   			\vspace{0.2em}
	   				$\overline{Gv}=\Sn$ for all  $v\in S^{n-1}$
	   				\vspace{0.2em}
	   		\end{minipage} & 
	   		\begin{minipage}{0.18\textwidth}
	   			\vspace{0.2em}
	   			$-x \in \overline{Gx}$ for all $x\in \Rn$
	   			\vspace{0.2em}
	   		\end{minipage} & 
	   		\begin{minipage}{0.18\textwidth}
	   			\vspace{0.2em}
	   			No non-zero fixed points
	   			\vspace{0.2em}
	   		\end{minipage} &
	   		\begin{minipage}{0.18\textwidth}
	   			Always true
	   		\end{minipage} \\ 
	   		\hline
	   		$\mathcal{K}_{G} \supset \mathcal{C} \iff$ & 
	   		\begin{minipage}{0.18\textwidth}
	   			\vspace{0.2em}
	   			Always true
	   			\vspace{0.2em}
	   		\end{minipage} & 
	   		\begin{minipage}{0.18\textwidth}
	   			\vspace{0.2em}
	   			$G = \{I\}$ or $\{\pm I\}$
	   			\vspace{0.2em}
	   		\end{minipage} & 
	   		\begin{minipage}{0.18\textwidth}
	   			$G = \{I\}$
	   		\end{minipage} &
	   		\begin{minipage}{0.18\textwidth}
	   			$G=\{I\}$
	   		\end{minipage} \\
	   		\hline
	   	\end{tabular}
	   \end{table} 
	  where $Gv = \{ gv : g \in G \}$ is the orbit and $\overline{Gv}$ its closure. $\mathcal{C}$ denotes the convex body class in each column header.


	

We study the $\Lp$ dual Minkowski problem for $G$-invariant convex bodies, where $G$ is an irreducible group.
	We say a group $G$ is \textit{irreducible} if its action on $\mathbb{R}^n$ is irreducible  (i.e., no nontrivial subspace $V \subset \mathbb{R}^n$ satisfies $gV = V$ for all $g \in G$). In other words, 	 $G $ is irreducible if the only subspaces of $ \mathbb{R}^n$ that are invariant under $G$  are $\{0\}$ and $\mathbb{R}^n$.

From an algebraic perspective, origin-symmetric convex bodies  can be viewed as $\{\pm I\}$-invariant convex bodies, where $I$ is the identity map. Note that $\{\pm I\}$ is not irreducible. For example, when $n=2$, any line passing through the origin is an invariant subspace under $\{\pm I\}$. 
In contrast, the rotation symmetry group $G$ of an equilateral triangle, consisting of rotations by $0^\circ$, $120^\circ$, and $240^\circ$, is clearly irreducible. The $G$-invariant convex bodies  are precisely those convex bodies that remain unchanged under the $120^\circ$  rotation.

	For a specific irreducible group $G \subset \On$, the set of $G$-invariant convex bodies and the set $\mathcal{K}_e^n$ may be such that neither contains the other (e.g., $G$ is the symmetry group of a regular simplex), or the former may be contained in the latter (e.g., $G$ is the symmetry group of a cube):
\begin{figure}[H]
	\scalebox{0.78}
	\centering
	\begin{tikzpicture}[
		set/.style={
			ellipse,
			draw,
			minimum width=3.25cm,
			minimum height=2.38cm,
			inner sep=0pt,
			align=center
		},
		label/.style={font=\small, align=center}
		]
		
		\begin{scope}[xshift=-4.5cm, scale=0.9]
			\node[set, minimum width=7.3cm, minimum height=4.1cm] (A) at (0,0) {};
			
			\node[set] (B) at (-1.3,0) {G-invariant \\ convex bodies};
			\node[set] (C) at (1.33,0) {$\mathcal{K}_e^n$};
			
			\node[above, yshift=-1.01cm] at (A.north) {$\mathcal{K}_o^n$};
		\end{scope}
		
		\node at (0,0) {\Large\textbf{or}};
		
		\begin{scope}[xshift=4.5cm, scale=0.9]
			\node[set, fill=white, minimum width=7.3cm, minimum height=4.1cm] (outer) at (0,0) {};
			\node[above, yshift=-0.7cm] at (outer.north) {$\mathcal{K}_o^n$};
			
			\node[set, fill=white, minimum width=5.07cm, minimum height=2.9cm] (middle) at (0,0) {};
			\node[above, yshift=-0.68cm] at (middle.north) {$\mathcal{K}_e^n$};
			
			\node[set, fill=white, minimum width=2.56cm, minimum height=1.76cm] (inner) at (0,0) {G-invariant \\ convex bodies};
		\end{scope}
	\end{tikzpicture}
\end{figure}
		

		
		The class of $G$-invariant convex bodies is  closed under the operations of   $L_p$ Minkowski addition $+_{p}$, scalar multiplication, polar  operation $*$, and convex hull operation  (if $K, L\in \mathcal{K}_{G}$, then $\conv(K, L) \in \mathcal{K}_{G}$). This rich algebraic structure ensures abundance. In Section \ref{geometry}, we present some classical examples of irreducible subgroups $G$ of $\On$ and their $G$-invariant convex bodies.

Irreducibility has profound connections with the Minkowski-type problem.  Surprisingly,  for any irreducible group $G\subset\On$, the $L_p$ dual Minkowski problem admits a solution among the $G$-invariant convex bodies for all $q, p \in \mathbb{R}$. We thus  resolve the existence problem via variational methods, which rely on the geometric characterization of irreducible groups.

			\begin{theorem}\label{1.2}
			Let $q\in \mathbb{R}$, $p \in \mathbb{R}$, $G$ be an irreducible subgroup of $\mathrm{O}(n)$ with $n \geq 2$, and $Q$ be a $G$-invariant star body in $\mathbb{R}^n$. For any non-zero finite Borel measure $\mu$ on $S^{n-1}$, $\mu$ is $G$-invariant if and only if there exists a $G$-invariant convex body $K$ in $\mathbb{R}^n$ such that
			\[
			\mu = \widetilde{C}_{p,q}(K, Q, \cdot) \quad \text{when  $p \neq q$},
			\]
			and
			\[
			\mu = \lambda \widetilde{C}_{p,q}(K, Q, \cdot) \quad \text{for some $\lambda>0$ when $p=q$}.
			\]
		\end{theorem}

Note that, as mentioned earlier, this solution $K$ is not necessarily origin-symmetric.
We emphasize that the above theorem contains many interesting special  cases.		When $q = n$, the $L_p$ dual Minkowski problem reduces to the $L_p$ Minkowski problem — a case largely open particularly for $p \leq 0$, which encompasses both the logarithmic Minkowski problem and the centro-affine Minkowski problem.   Here we present a complete  solution to the existence part under the group-invariant assumption. It follows from Theorem \ref{1.2} that:

\begin{corollary}
		Let  $G$ be an irreducible subgroup of $\mathrm{O}(n)$ with $n \geq 2$, $p\in\R$.  For any non-zero finite Borel measure $\mu$ on $S^{n-1}$, $\mu$ is $G$-invariant if and only if there exists a $G$-invariant convex body $K$ in $\mathbb{R}^n$ such that
	\[
	\mu = S_{p}(K,  \cdot) \quad \text{when  $p \neq n$},
	\]
	and
	\[
	\mu = \lambda S_{n}(K, \cdot) \quad \text{for some $\lambda>0$}.
	\]
\end{corollary}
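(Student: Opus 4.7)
The plan is to deduce this corollary as a direct consequence of Theorem \ref{1.2} by specializing to $q = n$. Two standard facts are all that is needed.

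First, I would invoke the reduction identity: when $q = n$, the exponent $q - n$ vanishes in the integral defining $\widetilde{C}_{p,q}(K, Q, \cdot)$, so $\rho_Q^{q-n} \equiv 1$, the dependence on $Q$ drops out, and one has
\[
\widetilde{C}_{p,n}(K, Q, \cdot) = \tfrac{1}{n}\, S_p(K, \cdot)
\]
for every star body $Q$. In particular I may take $Q = B^n$, which is trivially $G$-invariant since $G \subset \On$.

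Second, for the forward implication, I would verify that $G$-invariance of $K$ implies $G$-invariance of $S_p(K, \cdot)$: the support function satisfies $h_K(gu) = h_{g^{-1}K}(u) = h_K(u)$ for $g \in G$, while the classical surface area measure obeys the orthogonal equivariance $S(K, gE) = S(g^{-1}K, E) = S(K, E)$. Combining these via the defining relation $dS_p(K, \cdot) = h_K^{1-p}\, dS(K, \cdot)$ yields the claim.

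For the converse, I would apply Theorem \ref{1.2} with $q = n$ and $Q = B^n$ to the given $G$-invariant measure $\mu$. When $p \neq n$, the theorem produces a $G$-invariant convex body $K_0$ satisfying $\mu = \widetilde{C}_{p,n}(K_0, B^n, \cdot) = \tfrac{1}{n}\, S_p(K_0, \cdot)$; since $S_p$ is $(n-p)$-homogeneous in $K$, the rescaling $K := n^{-1/(n-p)} K_0$ absorbs the factor $1/n$ and gives $\mu = S_p(K, \cdot)$ with $K$ still $G$-invariant. When $p = n$, Theorem \ref{1.2} yields $\mu = \lambda'\, \widetilde{C}_{n,n}(K, B^n, \cdot) = (\lambda'/n)\, S_n(K, \cdot)$ for some $\lambda' > 0$, and relabelling $\lambda := \lambda'/n$ completes the argument.

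There is no substantive obstacle here: the corollary is a bookkeeping consequence of Theorem \ref{1.2}, with all of the geometric and analytic content, in particular the use of the irreducibility of $G$ and the uniform estimates underlying the variational approach, already concentrated in the proof of that theorem.
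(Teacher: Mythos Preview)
Your proposal is correct and matches the paper's approach: the paper simply states that the corollary ``follows from Theorem~\ref{1.2}'' without spelling out any details, and you have accurately supplied the bookkeeping (specialize to $q=n$, $Q=B$, use the identity $S_p(K,\cdot)=n\,\widetilde{C}_{p,n}(K,B,\cdot)$, and rescale via the $(n-p)$-homogeneity of $S_p$). There is nothing further to add.
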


When $p=0$, the $\Lp$ dual Minkowski problem reduces to the dual Minkowski problem. Applying Theorem \ref{1.2} with $p=0$ and $Q=B$ yields:

\begin{corollary}
	Let $q \neq 0$,  $G$ be an irreducible subgroup of $\mathrm{O}(n)$ with $n \geq 2$.  For any non-zero finite Borel measure $\mu$ on $S^{n-1}$, $\mu$ is $G$-invariant if and only if there exists a $G$-invariant convex body $K$ in $\mathbb{R}^n$ such that
	\[
	\mu = \widetilde{C}_{q}(K,   \cdot).
	\]
\end{corollary}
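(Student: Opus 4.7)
The plan is to derive this corollary as a direct specialization of Theorem~\ref{1.2} with the choices $p = 0$ and $Q = B$, the Euclidean unit ball. First I would verify that both hypotheses of Theorem~\ref{1.2} are satisfied for these choices. The unit ball is trivially $G$-invariant for any subgroup $G \subset \On$, since every orthogonal transformation preserves the Euclidean norm, so $Q = B \in \KG$ is a legitimate input. Moreover, because $q > 0$ we automatically have $p = 0 \neq q$, so we land in the first (and cleaner) case of Theorem~\ref{1.2}, where $\mu$ equals $\widetilde{C}_{p,q}(K, Q, \cdot)$ on the nose rather than merely up to a positive multiplicative constant.

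Next I would appeal to the definitional identity
\[
\widetilde{C}_{0,q}(K, B, \cdot) \;=\; \widetilde{C}_q(K, \cdot),
\]
which follows immediately from the construction of the $(p,q)$-th dual curvature measure recalled in Section~2 and reflects the author's remark in the introduction that the dual Minkowski problem is recovered from the $\Lp$ dual Minkowski problem precisely at $p = 0$. Indeed, taking $\rho_B \equiv 1$ eliminates the $\rho_Q^{q-n}$ factor in the defining integral (compare also equation~\eqref{continuous}), while $p = 0$ collapses the remaining $h^{p}$-type weight to the one appearing in the classical dual curvature measure, so the two measures coincide as Borel measures on $\Sn$.

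Combining these two observations, Theorem~\ref{1.2} applied to $(p, q, Q) = (0, q, B)$ immediately produces both directions of the biconditional: a non-zero finite Borel measure $\mu$ on $\Sn$ is $G$-invariant if and only if there exists a $G$-invariant convex body $K$ with $\mu = \widetilde{C}_q(K, \cdot)$. Since the corollary is a routine specialization, I do not expect any genuine obstacle at this stage; all the analytical and variational difficulty is already absorbed into Theorem~\ref{1.2}. The only point that warrants a careful check is the identification $\widetilde{C}_{0,q}(K, B, \cdot) = \widetilde{C}_q(K, \cdot)$ against the precise normalizations of Section~2, after which the proof is complete in a single line of invocation.
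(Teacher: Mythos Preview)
Your proposal is correct and matches the paper's own approach exactly: the paper derives this corollary by applying Theorem~\ref{1.2} with $p=0$ and $Q=B$, using the identity $\widetilde{C}_{0,q}(K,B,\cdot)=\widetilde{C}_q(K,\cdot)$ listed in Section~2 and the observation that $q>0$ forces $p\neq q$. If anything, you have spelled out the verification (that $B$ is $G$-invariant and that the normalizations agree) more carefully than the paper does.
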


When $q=0$, the $(p,0)$-th dual curvature measure, after a slight modification, becomes the $L_p$ integral curvature measure $J_p(K, \cdot)$. Applying Theorem \ref{1.2} with $q=0$ and $Q=B$, and making appropriate adjustments, we obtain the existence of solutions to the $L_p$ Aleksandrov problem.

\begin{corollary}
	Let  $G$ be an  irreducible  subgroup of $\mathrm{O}(n)$ with $n \geq 2$, and let $\mu$ be a non-zero finite Borel measure  on $S^{n-1}$.
	\begin{enumerate}
		\item If $p\neq 0$, 	$\mu$ is $G$-invariant if and only if there exists a $G$-invariant convex body $K$ in $\mathbb{R}^n$ such that
		$\mu=J_{p}(K, \cdot)$.
		\item $\mu$ is $G$-invariant and $| \mu|=nV(B)$ if and only if there exists a $G$-invariant convex body $K$ in $\mathbb{R}^n$ such that
		$\mu$ is the integral curvature of $K$.
	\end{enumerate}
\end{corollary}

\vspace{1\baselineskip} 
	\section{Preliminaries}
	Let $\left( \mathbb{R}^n, \langle \cdot, \cdot \rangle\right) $ denote the $n$-dimensional Euclidean space  with the standard inner product.
	
	Let $S^{n-1}$ represent the unit sphere and $B$ the closed unit ball centered at the origin in $\mathbb{R}^n$.
	Let $B(r)=rB$ be a closed ball with radius $r$ in $\Rn$ and centered at the origin.  	Let $\mathcal{B}^n$ denote the class of Euclidean balls centered at the origin in $\mathbb{R}^n$.
	
	A \textit{convex body} is a compact convex subset of $ \mathbb{R}^{n} $ with non-empty interior. The set of convex bodies in  $\mathbb{R}^{n} $ is denoted by $ \mathcal{K}^{n}$,  the set of origin-symmetric convex bodies in  $\mathbb{R}^{n} $ is denoted by $ \mathcal{K}_{e}^{n}$, and $ \mathcal{K}_{o}^{n} $ denotes the set of convex bodies in $\mathbb{R}^{n}$ that contain the origin in their interior.
	
	The \textit{support function} of a compact convex set $ K $ is given by 
	$$ h_{K}(x)=\max _{y\in K} \la y , x \ra, \quad \text{for $x\in \Rn$}.$$
	
	A set $K$ is \textit{star-shaped} with respect to a point $x$ if every line through $x$ that meets $K$ does so in a (possibly degenerate) closed line segment.
	 If $K$ is a compact star-shaped set with respect to the origin, then for all $u\in S^{n-1}$, its \textit{radial  function} $ \rho_K(\cdot )$   is defined by (see \cite{Gaedner gtbook,schneiderbook2014})
	\[
	\rho_K(u)=\max\{c:cu\in K\},
	\]
	if the radial function of $K$ is continuous, then $K$ is called a \textit{star body}. We define the set of star bodies $\mathcal{S}^{n}_{o}$ in $\Rn$ whose radial functions are positive and continuous. The radial function is  homogeneous of degree $-1$, that is,
	\begin{align}\label{rad_fun_homo}
		\rho_K(cu)=c^{-1}\rho_K(u), \quad c>0.
	\end{align}
	For an invertible linear transformation $\phi$, the support function $h_K$ and the radial function $\rho_K$ transform as follows:
	\begin{align}
		h_{\phi K}(y) &= h_K(\phi^T y), \label{eq:support-transform} \\
		\rho_{\phi K}(y) &= \rho_K(\phi^{-1} y). \label{eq:radial-transform}
	\end{align}
	
The polar body $ K^{*} $ of $ K\in \mathcal{K}_{o}^{n} $ is defined
by
\begin{equation}
	K^{*}=\left\{x \in \mathbb{R}^{n}: \la x, y \ra \leq 1 \quad \text { for all } y \in K\right\}.
\end{equation}
	
	For a convex body $K \in \mathcal{K}_o^n$ and a Borel set $\eta \subset S^{n-1}$, define
	\[
	\alpha_K^*(\eta) =  \big\{u \in S^{n-1} : \rho_K(u)u \in \nu_K^{-1}(\eta)\big\},
	\]
	where  $\nu_{K}$ is the outer unit normal vector  of $K$.
	
		The $q$-th dual mixed volume for star bodies $K, Q \in \mathcal{S}_{o}^{n}$ and $q \in \mathbb{R}$ is given by
	\[
	\widetilde{V}_{q}(K, Q) = \frac{1}{n} \int_{S^{n-1}} \rho_{K}^{q}(u)\rho_{Q}^{n-q}(u)  du.
	\]   
	 When $Q=B$, for each $q = 1, \ldots, n$,  $\widetilde{V}_{q}(K, B)$ coincides exactly with the $(n - q)$-th dual quermassintegral $\widetilde{W}_{n-q}(K)$: 
	 \[
	 \widetilde{W}_{n-q}(K) = \frac{\omega_n}{\omega_q} \int_{G(n,q)} \mathcal{H}^q(K\cap\xi)\,d\xi,
	 \]
	 where the integral is taken with respect to the Haar measure on the Grassmannian $G(n,q)$ of all $q$-dimensional subspaces of $\mathbb{R}^n$. Here $\mathcal{H}^{q}$ denotes the $q$-dimensional Hausdorff measure and $\omega_q$ denotes the $q$-dimensional volume of the  unit ball in $\mathbb{R}^q$.

	For convex bodies $K \in \mathcal{K}_{o}^{n}$, the variation of the $q$-th dual mixed volume yields the $q$-th dual curvature measure with respect to $Q\in \mathcal{S}_{o}^{n}$. This Borel measure on $S^{n-1}$, introduced by Lutwak-Yang-Zhang \cite{lyz2018} (and by Huang-LYZ \cite{HLYZdual} for $Q = B$), is defined as
	\begin{equation}\label{defdual}
	\widetilde{C}_{q}(K, Q, \eta) = \frac{1}{n} \int_{\alpha_{K}^{*}(\eta)} \rho_{K}^{q}(u)\rho_{Q}^{n-q}(u)  du
	\end{equation}
	for every Borel set $\eta\subset\Sn$.
	
		The set of continuous positive functions on the sphere $ S^{n-1} $ will be denoted by $ C^{+}(S^{n-1}) $. For each $ f\in C^{+}(S^{n-1}) $, the Wulff shape $ [f] $ generated by $ f $ is a convex body defined by
	\begin{equation}\label{Wulff shape}
		[f]=\left\{x\in \mathbb{R}^{n}: \la x, v \ra \le f(v),\; \text{for all}\; v\in S^{n-1}\right\}.
	\end{equation}

	The variational formula for the $q$-th dual mixed volume is established in \cite{HLYZdual,lyz2018}. Specifically, for $q \neq 0$, $K \in \mathcal{K}_o^n$, $Q \in \mathcal{S}_o^n$, and a continuous function $f : S^{n-1} \to \mathbb{R}$, 
	\begin{equation}\label{dual vari}
			\lim_{t \to 0} \frac{\widetilde{V}_q([ h_K + tf], Q) - \widetilde{V}_q(K, Q)}{t} = q \int_{S^{n-1}} \frac{f(v)}{h_K(v)} \, d\widetilde{C}_q(K, Q, v),
	\end{equation}
	where $|t|$ is sufficiently small.
	
	When $q = 0$, the $0$-th dual curvature measure, also known as the integral curvature measure, is the variational derivative of the \textit{dual mixed entropy}. The \textit{dual mixed entropy} $\widetilde{E}(K, Q)$ of star bodies $K, Q \in \mathcal{S}_o^n$ is defined by \cite{lyz2018}
	\begin{equation}\label{entropy def} 
	\widetilde{E}(K, Q) = \frac{1}{n} \int_{S^{n-1}} \log \left( \frac{\rho_K(u)}{\rho_Q(u)} \right) \rho_Q^n(u)  du,
	\end{equation}
	where $\rho_K$ and $\rho_Q$ denote the radial functions of $K$ and $Q$, respectively.  For $K \in \mathcal{K}_o^n$, $Q \in \mathcal{S}_o^n$, and a continuous function $f : S^{n-1} \to \mathbb{R}$, the following variational formula holds \cite{lyz2018}:
	\begin{equation}\label{entropy vari}
	\lim_{t \to 0} \frac{\widetilde{E}([h_{K}+tf],Q) - \widetilde{E}(K,Q)}{t} = \int_{S^{n-1}} \frac{f(v)}{h_K(v)}  d\widetilde{C}_0(K,Q,v).
	\end{equation}
	
	From Lemma 5.1 in \cite{lyz2018}, for $q \neq 0$ and a bounded Borel function $f : S^{n-1} \to \mathbb{R}$, we have:
	\begin{equation}
	\int_{S^{n-1}} f(v) \, d\widetilde{C}_q(K, Q, v) = \frac{1}{n} \int_{\partial K} f(\nu_K(x)) \langle \nu_K(x), x \rangle \rho_Q^{n-q}(x) \, d\mathcal{H}^{n-1}(x).
	\end{equation}

The $(p,q)$-th dual curvature measure, introduced by Lutwak-Yang-Zhang \cite{lyz2018}, unifies $L_p$ surface area measures, $L_p$ integral curvature measures, and dual curvature measures. For $p, q \in \mathbb{R}$, $Q \in \mathcal{S}_o^n$, and $K \in \mathcal{K}_o^n$, the $(p,q)$-th dual curvature measure  $\widetilde{C}_{p,q}(K, Q, \cdot)$ is defined by:
		\begin{equation}\label{pqdef}
	d\widetilde{C}_{p,q}(K, Q, \cdot) = h_K^{-p} \, d\widetilde{C}_q(K, Q, \cdot).
	\end{equation}
		Three important special cases are:
	\begin{itemize}
		\item The $L_p$ surface area measure: $S_p(K, \cdot) = n\widetilde{C}_{p,n}(K, B, \cdot)$.
		\item The $L_p$ integral curvature: $J_p(K, \cdot) = n\widetilde{C}_{p,0}(K^*, B, \cdot)$.
		\item The dual curvature measure: $\widetilde{C}_{q}(K,  \cdot)=\widetilde{C}_{0,q}(K, B, \cdot)$.
	\end{itemize}
 As shown in Theorem 6.8 of \cite{lyz2018}, for $\phi \in \text{SL}(n, \mathbb{R})$, the following transformation formula holds:
\begin{equation}\label{measure trans}
	\int_{S^{n-1}} f(v) \, d\widetilde{C}_{p,q}(\phi K, \phi Q, v) = \int_{S^{n-1}} |\phi^{-T}v|^{p} f \left( \frac{\phi^{-T}v}{|\phi^{-T}v|} \right) \, d\widetilde{C}_{p,q}(K, Q, v),
\end{equation}
where $\phi^{-T}$ represents the transpose of the inverse of $\phi$.	
	
	The continuity of the dual mixed volumes and that of the dual curvature measures  are stated as follows:

	\begin{lemma}[\cite{lyz2018}]\label{continuity}
		For $p,q \in \mathbb{R}$ and $Q \in \mathcal{S}^{n}_o$, if $K_m \in \mathcal{K}^{n}_{o}$ with $K_m \to K \in \mathcal{K}^{n}_{o}$, then
		$\widetilde{V}_q(K_m,Q) \to \widetilde{V}_q(K,Q)$, and $\widetilde{C}_{p,q}(K_m,Q,\cdot) \to \widetilde{C}_{p,q}(K,Q,\cdot)$ weakly.
	\end{lemma}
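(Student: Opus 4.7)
The plan is to treat the two convergence claims separately, reducing both to the uniform convergence of radial/support functions under Hausdorff convergence and to the a.e.\ convergence of the radial Gauss map on regular points.

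First, for $\widetilde{V}_q(K_m, Q) \to \widetilde{V}_q(K, Q)$, I would use the integral representation
\[
\widetilde{V}_q(K, Q) = \frac{1}{n}\int_{S^{n-1}} \rho_K^{q}(u)\rho_Q^{n-q}(u)\,du.
\]
Since $K_m \to K$ in the Hausdorff metric with $K \in \mathcal{K}_o^n$, one has $\rho_{K_m} \to \rho_K$ uniformly on $S^{n-1}$, and there exist constants $0 < r < R$ with $r \le \rho_{K_m}(u) \le R$ for all $u \in S^{n-1}$ and all sufficiently large $m$. Combined with the continuity and positivity of $\rho_Q$, the integrands $\rho_{K_m}^{q}\rho_Q^{n-q}$ converge uniformly and are uniformly bounded, so the convergence of integrals follows immediately (or equivalently by dominated convergence).

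For the weak convergence of $\widetilde{C}_{p,q}(K_m, Q, \cdot)$, I would first peel off the factor $h_K^{-p}$ using the definition $d\widetilde{C}_{p,q}(K,Q,\cdot) = h_K^{-p}\,d\widetilde{C}_q(K,Q,\cdot)$. For any $f \in C(S^{n-1})$, the Hausdorff convergence gives uniform convergence $h_{K_m} \to h_K$ together with uniform positive lower bounds on $h_{K_m}$, so $fh_{K_m}^{-p} \to fh_K^{-p}$ uniformly on $S^{n-1}$. Thus the problem reduces to proving $\widetilde{C}_q(K_m, Q, \cdot) \to \widetilde{C}_q(K, Q, \cdot)$ weakly, after which a standard argument (uniform convergence of test functions against a weakly convergent sequence of finite measures with bounded total mass) will close the gap.

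To obtain the weak convergence of $\widetilde{C}_q(K_m,Q,\cdot)$, I would use the radial representation
\[
\int_{S^{n-1}} g(v)\,d\widetilde{C}_q(K,Q,v) = \frac{1}{n}\int_{S^{n-1}} g(\alpha_K(u))\,\rho_K^{q}(u)\,\rho_Q^{n-q}(u)\,du,
\]
valid for bounded Borel $g$, where $\alpha_K$ is the radial Gauss map. For a continuous test function $g$, the integrand on the right is uniformly bounded. The main technical point is to verify that $\alpha_{K_m}(u) \to \alpha_K(u)$ for almost every $u \in S^{n-1}$: this follows because the boundary point $\rho_K(u)u$ is regular for $K$ for $\mathcal{H}^{n-1}$-a.e.\ $u$, and at such points the outer normal depends continuously on the body in the Hausdorff metric. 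Combining this a.e.\ convergence with the uniform bound and the uniform convergence $\rho_{K_m}^{q}\rho_Q^{n-q}\to\rho_K^{q}\rho_Q^{n-q}$, the dominated convergence theorem yields the desired convergence.

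The main obstacle I would expect is precisely the a.e.\ convergence of the radial Gauss maps $\alpha_{K_m}$; the rest of the argument is essentially bookkeeping with uniform bounds and dominated convergence. This step requires knowing that the set of singular boundary points of $K$ has zero $\mathcal{H}^{n-1}$-measure after pull-back to the sphere and that normals at regular boundary points vary continuously under Hausdorff perturbations — both classical facts from convex geometry (see, e.g., Schneider's book), which is why the authors simply cite \cite{lyz2018}.
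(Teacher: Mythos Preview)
Your argument is correct and is essentially the standard proof of this continuity result; the paper itself does not prove the lemma but simply cites \cite{lyz2018}, so there is no ``paper's own proof'' to compare against beyond noting that your outline matches the approach in that reference.
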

	
	Another important class of dual Minkowski problems is the  chord Minkowski problem, see \cite{guoxizhao,LYZX chord,li chord,xyzz2023}.
	
	For a subgroup $G$ of $\On$,  a convex body $K \subset \mathbb{R}^n$ is \textit{$G$-invariant} if it satisfies
	\[
	gK= K \quad \text{for all} \quad g \in G.
	\]
	A function $f$ defined on $\Sn$ is \textit{$G$-invariant} if
	\[
	f(gx) = f(x) \quad \text{for all} \quad g \in G, x\in \Sn.
	\]
	A Borel measure $\mu$ on $S^{n-1}$ is \textit{$G$-invariant} if
	\[
	\mu(gE) = \mu(E) \quad \text{for all}\quad g \in G \text{ and any Borel set } E \subset S^{n-1}.
	\]

	We say a group $G$ is irreducible if its action on $\mathbb{R}^n$ is irreducible  (i.e., the only subspaces $V \subset \mathbb{R}^n$ satisfying $gV = V$ for all $g \in G$ are $V = \{0\}$ and $V = \mathbb{R}^n$).
	If $G\subset \On$ is irreducible, it has no non-zero fixed points ($n\ge2$). If there exists a non-zero fixed point $x_1 \in \mathbb{R}^n$ satisfying $gx_1 = x_1$ for all $g \in G$, then the line $\mathbb{R}x_1$ forms a nontrivial $G$-invariant subspace, contradicting the irreducibility. Since the centroid  of a convex body is equivariant under linear transformations,  for a $G$-invariant convex body $K$, its centroid is  at the origin.

		Let $G $ be any subgroup of $\On$.  Then $\mathbb{R}^n$ decomposes into an orthogonal direct sum of $G$-invariant subspaces:
		\[
		\mathbb{R}^n = V_1 \oplus V_2 \oplus \cdots \oplus V_k,
		\]
		and	the action of $G$ is irreducible on each $V_i$.  $G$ is irreducible if and only if $k=1$.
		If $\dim V_{i} \geq 2$ for all $i$, $G$ has no non-zero fixed points.

	\vspace{1\baselineskip} 
\section{Classification of Group-Invariant Convex Bodies}\label{classification}
	
	We recall some standard notation. Let $\mathcal{K}^n$ denote the set of convex bodies in $\mathbb{R}^n$, 
	$\mathcal{K}_o^n$ the set of convex bodies in $\mathbb{R}^n$ containing the origin in their interior, 
	and $\mathcal{K}_e^n$ the set of origin-symmetric convex bodies in $\mathbb{R}^n$. 
	Let $\mathcal{B}^n$ denote the class of Euclidean balls centered at the origin in $\mathbb{R}^n$. 
	The purpose of this section is to reveal connections between group-invariant convex bodies and these four classes of convex bodies. 

	We endow the orthogonal group $\On$ with the subspace topology induced by the inclusion $\On \subset \mathbb{R}^{n^2}$. 
	Specifically, we identify each matrix $U = (a_{ij}) \in \On$ with a point in $\mathbb{R}^{n^2}$ by considering its $n^2$ entries as coordinates. 
The group $\On$ is compact in $\mathbb{R}^{n^2}$ because it is both bounded and closed. It is bounded since for any $U = (a_{ij}) \in \On$, we have $|a_{ij}| \leq 1$ for all $i, j$, and it is closed because the condition $U^T U = I$ defines a closed set.
	Consequently, any closed subgroup  of $\On$ is also compact.  
	
		Given a subgroup $G$ of $\On$, 
	we denote by $\KG$ the collection of all $G$-invariant convex bodies, i.e.,
	\begin{equation*}
		\KG := \left\{ K \in \mathcal{K}^n : gK = K \text{ for all } g \in G \right\}.
	\end{equation*}
	
	First, it is well-known that  when $G = \On$ or $G=\SOn$, the only $G$-invariant convex bodies 
	are Euclidean balls centered at the origin. For any subgroup $G \subset \On$, 
	balls centered at the origin are always  $G$-invariant. 
	We  prove that if balls are the only $G$-invariant convex bodies, 
	then $G$ acts transitively on the unit sphere,	where the \textit{transitive action on $\Sn$} means that for any 
	$u, v \in \Sn$, there exists $g \in G$ such that $gu = v$.

	\begin{proposition}\label{G=ball}
		Let \( G \) be a closed subgroup of the orthogonal group \( \On \). The following are equivalent:
		\begin{enumerate}
			\item Every \( G \)-invariant convex body in \( \R^n \) is a Euclidean ball centered at the origin, i.e., $\KG=\mathcal{B}^{n}$.
			\item \( G \) acts transitively on the unit sphere \( \Sn \).  
		\end{enumerate}
	\end{proposition}
	
	\begin{proof}
		\medskip\noindent
		\textbf{(2) \(\Rightarrow\) (1):} 
		Assume \( G \) acts transitively on \( \Sn \). Let \( K \) be a \( G \)-invariant convex body. Fix a  vector $v \in S^{n-1}$, \( h_K(v)=r \). For any $u \in S^{n-1}$, 
		the transitivity of the $G$-action guarantees that there exists $g \in G$ 
		satisfying $gv = u$. By \eqref{eq:support-transform}, the support function  satisfies:
		\[
		h_K(u)=h_K(gv) =  h_{g^{-1}K}(v)=h_K(v)=r.
		\]
		This implies \( h_K \) is constant on \( \Sn \).  Thus, \( K \) is the Euclidean ball of radius \( r \).
		
		\medskip\noindent
		\textbf{(1) \(\Rightarrow\) (2):} 
		Suppose \( G \) does not act transitively on \( \Sn \), then there exist $u, v \in \Sn$ with $u \notin Gv=\{gv: g\in G\}$. 
		We construct a $G$-invariant convex body that is not a Euclidean ball.
		Define $K = \operatorname{conv}\left( B(\frac{1}{2}), Gv \right)$, 
		where $B(r)$ is the closed ball of radius $r$ centered at the origin.
		
		We have $u \notin K$. Otherwise, if \( u \in K \), there exist 
		\( g_1, \dots, g_m \in G \), 
		\( x_{m+1}, \dots, x_k \in B(\frac{1}{2}) \), 
		and weights \( \lambda_1, \dots, \lambda_k \geq 0 \) with \( \sum_{i=1}^k \lambda_i = 1 \) 
		such that:
		\[
		u = \lambda_1 g_1 v + \cdots + \lambda_m g_m v + \lambda_{m+1} x_{m+1} + \cdots + \lambda_k x_k.
		\]
		 We may assume \(\lambda_{1},\dots,\lambda_{m}>0\).  Since \(g_{i}\in G\subset \On\), \(|g_{i}v|=|v|=1\), and \(|x_{j}|\le \frac{1}{2}\), then
		\begin{align*}
			1=|u|&=|\lambda_{1}g_{1}v+\cdots+\lambda_{m}g_{m}v+\lambda_{m+1}x_{m+1}+\cdots+\lambda_{k}x_{k}|\\
			&\le \lambda_{1}|g_{1}v|+\cdots+\lambda_{m}|g_{m}v|+\lambda_{m+1}|x_{m+1}|+\cdots+\lambda_{k}|x_{k}|\\
			&\le \lambda_{1}+\cdots+\lambda_{m}+\frac{1}{2}\lambda_{m+1}+\cdots+\frac{1}{2}\lambda_{k}\\
			&\le\lambda_{1}+\cdots+\lambda_{m}+\lambda_{m+1}+\cdots+\lambda_{k}=1.
		\end{align*}
		The equality holds if and only if $g_1v =\cdots = g_mv\triangleq w\in \Sn$ and \(\lambda_{m+1}=\cdots=\lambda_{k}=0\). Then $u=w=g_1v =\cdots = g_mv$, which contradicts the fact that $u \notin Gv$. Thus, \(v\in K\) but \(u\notin K\). Since $|u|=|v|=1$, $K$ cannot be a Euclidean ball centered at the origin.
		
		Since $G$ is compact and the group action $(g,v) \mapsto gv$ is continuous, then
		the orbit $Gv$ is compact, $K$ is a compact convex set. Furthermore,  $B(\frac{1}{2})\subset K$. Hence $K$ is a convex body. For any $g\in G$ and any $$y=\lambda_{1}g_{1}v+\cdots+\lambda_{m}g_{m}v+\lambda_{m+1}x_{m+1}+\cdots+\lambda_{k}x_{k} \in K,$$
		we have
		 $$gy=\lambda_{1}gg_{1}v+\cdots+\lambda_{m}gg_{m}v+\lambda_{m+1}gx_{m+1}+\cdots+\lambda_{k}gx_{k} \in K$$
		 because $gg_{i}\in G$ and $|gx_{j}|=|x_{j}|\le\frac{1}{2}$. Therefore, $gK \subset K$. Since $g$ was arbitrary, replacing $g$ by $g^{-1}$ gives $g^{-1}K \subset K$, which implies $K \subset gK$. Consequently, $gK = K$ for all $g \in G$. Then $K$ is a \( G \)-invariant convex body. But $K$ is not a ball, which contradicts $(1)$.

	\end{proof}
	
	Therefore, if the class of $G$-invariant convex bodies coincides precisely with the set of balls, then $G$ must  be an infinite group when $n\ge 2$.

	In the study of the $L_{p}$ dual Minkowski problem, origin-symmetric convex bodies form an important class of objects. When $G = \{\pm I\}$ where $I$ denotes the identity map, it is clear that the class of $G$-invariant convex bodies coincides precisely with $\mathcal{K}_e^n$.
	
	This observation naturally leads to the following question: Under what conditions does the class of $G$-invariant convex bodies exactly equal $\mathcal{K}_e^n$, contain $\mathcal{K}_e^n$, or be contained in $\mathcal{K}_e^n$?

	\begin{proposition}\label{G=osym}
		Let $G$ be a subgroup of the orthogonal group $\On$. 
		The following conditions are equivalent:
		\begin{enumerate}
			\item $\KG=\mathcal{K}_{e}^{n}$.
			\item $G = \{\pm I\}$.
		\end{enumerate}
	\end{proposition}
	
	\begin{proof}
		We prove both directions of the equivalence.
		
		\noindent\textbf{(2 $\Rightarrow$ 1):} 
		Assume $G = \{\pm I\}$. 
		If $K$ is $G$-invariant, then for $g = -I \in G$, we have $(-I)K = -K = K$. 
			Thus $K$ is origin-symmetric.
		 If $K$ is origin-symmetric ($K = -K$), then
				$IK = K$, 
				$(-I)K = -K = K$,
			so $K$ is $G$-invariant.
		Hence the $G$-invariant convex bodies are precisely the origin-symmetric convex bodies.
		
		\noindent\textbf{(1 $\Rightarrow$ 2):} 
		Let $T: \mathbb{R}^n \to \mathbb{R}^n$ be a linear transformation such that for every $v \in \mathbb{R}^n$, either $Tv = v$ or $Tv = -v$. Then $T = \pm I$. Indeed, let $\{e_1,\ldots,e_n\}$ be an orthonormal basis. For each $i$, we have
		\begin{equation}\label{1}
			Te_i = \lambda_i e_i \quad \text{with } \lambda_i = \pm1.
		\end{equation}
	  For $i \neq j$, $T(e_{i}+e_{j})=\lambda_i e_i+\lambda_j e_j=\mu(e_{i}+e_{j})$
		for some $\mu = \pm 1$, implying $\lambda_i = \lambda_j$. Thus all $\lambda_i$ are equal. By \eqref{1} we have $T=\pm I$. 
	
		Assume condition (1) holds. We show that $G = \{\pm I\}$.
	If there exists $g \in G$ with $g \neq \pm I$,  we construct a convex body that is origin-symmetric but not $G$-invariant.
		Since $g \neq \pm I$, there exists a unit vector $v \in \mathbb{R}^n$ such that $gv \neq \pm v$. 
		Assume $v = e_1$ after a change of coordinates.
		
		Consider the ellipsoid (an origin-symmetric convex body)
		\begin{equation}\label{elliK}
		K = \{ x \in \mathbb{R}^n : x^T A x \leq 1 \},
		\end{equation}
		where $A = \operatorname{diag}(1, 2, \dots, 2)$ is a positive definite matrix. 
		Under the action of $g$, we have:
		\begin{align}\label{elligK}
		gK &= \{ gx \in \mathbb{R}^n : x^T A x \leq 1 \}\notag\\
		&=\{ y \in \mathbb{R}^n : (g^{-1}y)^T A (g^{-1}y) \leq 1 \}\notag\\
		&=\{ y \in \mathbb{R}^n : y^T (g A g^T) y \leq 1 \}.
		\end{align}
		
		Now compute the quadratic forms:
		\[
		e_1^T A e_1 = 1,
		\]
		\[
		e_1^T (g^{T} A g) e_1 = (g e_1)^T A (g e_1) = w^T A w, \quad \text{where} \quad w = g e_1.
		\]
		Since $g$ is orthogonal, $w$ is a unit vector. As $g e_1 \neq \pm e_1$, we have $w \neq \pm e_1$. 
	Let $w = (w_1, w_2, \dots, w_n)$ with $w_1^2 < 1$ and $\sum_{i=2}^n w_i^2 = 1 - w_1^2 > 0$. Then
		\[
		w^T A w =  w_1^2 + 2 \sum_{i=2}^n w_i^2 =  w_1^2 + 2 (1 - w_1^2)=2 - w_1^2 > 1  = e_1^T A e_1.
		\]
	 Thus $g^{T} A g \neq A$, since $g^{T}=g^{-1}$, so $g A g^{T} \neq A$. Then we have  $gK \neq K$ by \eqref{elliK} and \eqref{elligK}, contradicting 
	 $(1)$.
		Therefore, if $g\in G$, $g =  I$ or $g=-I$.
		
		 So we have $G \subset \{\pm I\}$. 
		If $G = \{I\}$, then all convex bodies (including non-origin-symmetric ones) are $G$-invariant, 
		contradicting  $(1)$. Thus $G = \{\pm I\}$.
	\end{proof}

	In Proposition \ref{G=osym}, we established that for any $g \neq \pm I$, there exists an origin-symmetric convex body $K$ such that $gK \neq K$. This leads to the following lemma:

\begin{lemma}

	Let $G$ be a subgroup of  $\On$. Then $ \mathcal{K}^{n}_{e}\subset \KG$ if and only if $G=\{I\}$ or $G=\{\pm I\}$.

\end{lemma}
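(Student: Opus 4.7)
The plan is to derive this lemma as a direct consequence of Theorem~\ref{G=osym}, with the forward direction using the ellipsoid construction from its proof and the backward direction being essentially immediate.

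For the easy direction ($\Leftarrow$), I would split into two cases. If $G = \{I\}$, every convex body (in particular every origin-symmetric one) is trivially $G$-invariant, so $\mathcal{K}_e^n \subset \KG$. If $G = \{\pm I\}$, Theorem~\ref{G=osym} already gives $\KG = \mathcal{K}_e^n$, from which the inclusion follows.

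For the converse ($\Rightarrow$), the strategy is to show $G \subset \{\pm I\}$ by ruling out each individual element $g \neq \pm I$. Suppose $\mathcal{K}_e^n \subset \KG$ and pick any $g \in G$. The crucial input is the observation extracted from the proof of Theorem~\ref{G=osym}: for any orthogonal $g \neq \pm I$, after an orthonormal change of coordinates placing $ge_1 \neq \pm e_1$, the ellipsoid $K = \{x \in \Rn : x^T A x \le 1\}$ with $A = \operatorname{diag}(1, 2, \ldots, 2)$ is origin-symmetric but satisfies $gK \neq K$. If such a $g$ belonged to $G$, the containment $K \in \mathcal{K}_e^n \subset \KG$ would force $gK = K$, a contradiction. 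Hence every element of $G$ equals $\pm I$, so $G \subset \{\pm I\}$. Since the only subgroups of $\{\pm I\}$ are $\{I\}$ and $\{\pm I\}$ itself, the conclusion $G = \{I\}$ or $G = \{\pm I\}$ follows.

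There is no substantive obstacle: the argument is simply a repackaging of the ellipsoid construction already carried out in Theorem~\ref{G=osym}. The only point to verify, and it is immediate from the computation $w^T A w = 2 - w_1^2 > 1$ for $w = ge_1 \neq \pm e_1$, is that the construction depends only on the algebraic condition $g \neq \pm I$ and not on $g$ being a specific element of $G$. Thus the same ellipsoid witnesses non-invariance uniformly across all candidate non-$\pm I$ elements, which is exactly what the contradiction requires.
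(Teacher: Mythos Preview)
Your proposal is correct and matches the paper's approach exactly: the paper states this lemma immediately after Theorem~\ref{G=osym}, noting that the ellipsoid construction there shows that for any $g \neq \pm I$ there is an origin-symmetric convex body not fixed by $g$, which is precisely your argument for the forward direction. One small wording issue: the ellipsoid depends on the chosen $g$ (via the change of coordinates), so it does not witness non-invariance ``uniformly'' across all $g$, but since you only need to rule out elements one at a time this does not affect the argument.
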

	
If $-I \in G$, then the $G$-invariant convex bodies are obviously origin-symmetric. The converse, however, is not true. For example, when $n$ is odd, $-I \notin \mathrm{SO}(n)$, but the $\mathrm{SO}(n)$-invariant convex bodies are only balls, which are naturally origin-symmetric. In fact, we have the following equivalence.

		\begin{proposition} \label{Gsub osym}
			Let \(G\) be a closed subgroup of \(\On\). The following conditions are equivalent:
			\begin{enumerate}
				\item $\KG \subset \mathcal{K}^{n}_{e}$.
				\item For every  \(x \in \Rn \),  \(-x \in Gx\).
			\end{enumerate}
		\end{proposition}

	\begin{proof}
			\noindent\textbf{(2 $\Rightarrow$ 1):} 
		Assume $(2)$ holds, and let \(K\) be a \(G\)-invariant convex body. To show \(K = -K\), take any \(x \in K\).  Since $K$ is $G$-invariant, \(gx \in K\) for all \(g \in G\).   By the  condition (2), \(-x \in Gx \subset K\).  Thus, \(K = -K\).
		
		\noindent\textbf{(1 $\Rightarrow$ 2):} 
		Assume there exists \(x \in \Rn\setminus \{0\}\) such that $-x \notin Gx $. Using the same construction as in Proposition \ref{G=ball}, choose $0 < r < |x|$ and define  
		$K = \operatorname{conv}(B(r), Gx)$.  Since $|-x|=|gx|$ for all $g\in G$, a similar argument to Proposition \ref{G=ball} shows that
		 $-x \notin K$.  But $x\in K$, so $K\neq -K$.
		Since $G$ is compact,  
		the orbit $Gx$ is compact, and thus $K$ is a compact convex set. Furthermore,  $B(r)\subset K$, so $K$ is a convex body. $K$ is clearly $G$-invariant, but $K \notin \mathcal{K}^{n}_{e}$, which contradicts $
		(1)$.
		
	\end{proof}

	But when $G$ is finite, we have
		
		\begin{proposition}\label{finite -i}
			Let \( G \) be a finite subgroup of  \( \mathrm{O}(n) \). 
		Then	$\KG \subset \mathcal{K}^{n}_{e}$ if and only if 
	\(-I\in G\).	
		\end{proposition}
		
		\begin{proof}
		A	finite group $G$ is naturally closed. From Proposition \ref{Gsub osym}, it suffices to prove that for every  \( x \in \mathbb{R}^n \),  \(-x\in Gx\) if and only if \(-I\in G\).
		
			If \(-I\in G\), $-x=-Ix$ is naturally contained in $Gx$.			
Conversely,	suppose \(-x\in Gx\) for any \( x \in \mathbb{R}^n \),  but \(-I \notin G\). For each  \( g \in G \), define the subspace
			\[
			V_g := \{ x \in \mathbb{R}^n \mid g  x = -x \}.
			\]
			Since \( g \) is  linear, \( V_g \) is  a linear subspace of \( \mathbb{R}^n \). \( V_g \neq \mathbb{R}^n \) for any \( g \in G \), because  \( V_g = \mathbb{R}^n \) would imply \( g = -I \), contradicting the assumption that \(-I \notin G\).
				
			By the given condition, for every \( x \in \mathbb{R}^n \), there exists some \( g_x \in G \) such that \( g_{x}  x = -x \). This implies that \( x \in V_{g_x} \). Therefore,
			\[
			\mathbb{R}^n = \bigcup_{g \in G} V_g.
			\]
			Since \( G \) is finite,  a finite union of proper subspaces cannot cover the entire space. This leads to a contradiction.
		\end{proof}

	Another fundamental class in convex geometry is $\mathcal{K}_o^n$, consisting of convex bodies containing the origin in their interior. We now examine the relationship between $G$-invariant convex bodies and this class.
	
	\begin{proposition} \label{int1}
		Let \(G\) be a  subgroup of \(\On\). The following conditions are equivalent:
		\begin{enumerate}
			\item $\KG \subset \mathcal{K}^{n}_{o}$.
			\item $G$ has no non-zero fixed points in $\mathbb{R}^n$, i.e., if $gx=x$ for any $g\in G$, then $x=0$.
		\end{enumerate}
	\end{proposition}
	
		\begin{proof}
		\noindent\textbf{(1 $\Rightarrow$ 2):} 
		Suppose, to the contrary, that there exists a non-zero vector $w \in \mathbb{R}^n $ 
		such that $g w = w$ for all $g \in G$. Let 
		\[
		K = \left\{ x \in \mathbb{R}^n : |x| \leq 1 \text{ and } \langle x, w \rangle \leq 0 \right\}.
		\]
		Then $K$ is a convex body as the intersection of the closed unit ball and a half-space. 
		Moreover, $K$ is $G$-invariant: for any $g \in G\subset \On$ and $x \in K$, we have $|gx| =|x| \leq 1$ and
		\[
		\langle gx, w \rangle = \langle gx, gw \rangle = \langle x, w \rangle \leq 0,
		\]
	so $gx\in K$. Thus $gK = K$. However, the origin is not in the interior of $K$, 
		because for any $t > 0$, the point $t w$ satisfies $\langle t w, w \rangle = t |w|^{2} > 0$, so $t w \notin K$. 
		This contradicts $\KG \subset \mathcal{K}^{n}_{o}$.
		
	\noindent\textbf{(2 $\Rightarrow$ 1):}  
		Assume $G$ has no non-zero fixed points. Let $K \in \mathcal{K}_{G}$ be a $G$-invariant convex body. 
		Since the centroid of a convex body is equivariant with linear transformations, the centroid of $K$ is G-invariant. More precisely, let $T(K)$ denote the centroid of $K$, we have
		$$gT(K)=T(gK)=T(K)$$
		for all $g\in G$. Thus, 
		$T(K)$ is a fixed point of $G$. Since $G$ has no nonzero fixed points, it follows that $T(K) = 0$. Since the centroid of the convex body $K$ is at the origin, we conclude that $K \in \mathcal{K}_o^n$.
	\end{proof}

	For $G = \{I\}$, the $G$-invariant convex bodies constitute the entire collection of convex bodies, which naturally includes $\mathcal{K}_o^n$. Furthermore, we have:

		\begin{proposition} \label{int2}
		Let \(G\) be a  subgroup of \(\On\). The following conditions are equivalent:
		\begin{enumerate}
			\item $\mathcal{K}^{n}_{o} \subset \KG$.
			\item $G=\{I\}$.
		\end{enumerate}
	\end{proposition}
	
	\begin{proof}
			\noindent\textbf{(2 $\Rightarrow$ 1):}
		If	$G = \{I\}$, then $\KG$ is the entire collection of convex bodies. 
		 
		\noindent\textbf{(1 $\Rightarrow$ 2):}
Suppose	$\mathcal{K}^{n}_{o} \subset \KG$ but $G$ is non-trivial. We  construct a convex body $K \in \mathcal{K}^{n}_{o} $ that is not $G$-invariant, contradicting the assumption.
		
		Since $G$ is non-trivial, there exists $g \in G$ with $g \neq I$. Then there exists $v \in \R^n$ such that $gv \neq v$ and $v \neq 0$. Fix $r > 0$ such that $r < |v|$ and define
		\[
		K = \conv\left( B(r) \cup \{v\} \right).
		\]
		Thus, the subsequent proof is  analogous to Proposition \ref{G=ball} and Proposition \ref{Gsub osym}.
		Since $gv\neq v$ and $|gv|=|v|$,
		we have $gv \notin K$. But $v\in K$, so $K$ is not $G$-invariant.
		Since $B(r)\subset K$, so $K\in\mathcal{K}^{n}_{o}$,  which contradicts condition $
		(1)$.
	\end{proof}

Propositions \ref{int1} and \ref{int2} imply that no subgroup $G$ of $\On$ 
satisfies $\mathcal{K}_G = \mathcal{K}_o^n$.  Moreover, if $\KG = \mathcal{K}^{n}$, then necessarily $G = \{I\}$.

	The main results of this section are summarized as follows, 	where $\mathcal{C}$ denotes  the class of convex bodies corresponding to the given columns.

	\begin{table}[h]
		\centering
		\begin{tabular}{|c|c|c|c|c|}
			\hline
			\diagbox[width=5.5em, height=1.8em]{}{$\mathcal{C}$} & $\mathcal{B}^n$ & $\mathcal{K}_e^n$ & $\mathcal{K}_o^n$ & $\mathcal{K}^n$ \\ 
			\hline
			$\KG = \mathcal{C} \iff$ & 
			\begin{minipage}{0.18\textwidth}
				$G$ acts transitively on $S^{n-1}$
			\end{minipage} & 
			\begin{minipage}{0.18\textwidth}
				$G = \{\pm I\}$
			\end{minipage} & 
			\begin{minipage}{0.18\textwidth}
				No such $G$
			\end{minipage} &
			\begin{minipage}{0.18\textwidth}
			$G=\{I\}$
			\end{minipage} \\
			\hline
			$\KG \supset \mathcal{C} \iff$ & 
			\begin{minipage}{0.18\textwidth}
				Always true
			\end{minipage} & 
			\begin{minipage}{0.18\textwidth}
				$G = \{I\}$ or $\{\pm I\}$
			\end{minipage} & 
			\begin{minipage}{0.18\textwidth}
				$G = \{I\}$
			\end{minipage} &
			\begin{minipage}{0.18\textwidth}
				$G=\{I\}$
			\end{minipage} \\
			\hline
			$\KG \subset \mathcal{C} \iff$ & 
			\begin{minipage}{0.18\textwidth}
				$G$ acts transitively on $S^{n-1}$
			\end{minipage} & 
			\begin{minipage}{0.18\textwidth}
				$-x \in Gx$ for all $x$
			\end{minipage} & 
			\begin{minipage}{0.18\textwidth}
				No non-zero fixed points
			\end{minipage} &
			\begin{minipage}{0.18\textwidth}
				Always true
			\end{minipage} \\
			\hline
		\end{tabular}
	\end{table}

	\begin{remark}
		In Propositions \ref{G=ball} and \ref{Gsub osym}, we require $G$ to be a closed subgroup, while other propositions in this section impose no closedness restriction. Without the closedness assumption, Propositions \ref{G=ball} and \ref{Gsub osym} become invalid, since the set $K$ constructed in the proof may not  be a convex body, though they can be corrected with  appropriate modifications. For brevity, we discuss counterexamples and generalized versions without closedness assumptions in  Section \ref{open}. Consequently, we have actually established complete characterizations of the containment relations between the class of $G$-invariant convex bodies and the classes  $\mathcal{B}^{n}$, $\mathcal{K}^n_{e}$, $\mathcal{K}^n_{o}$, and $\mathcal{K}^n$ for arbitrary subgroups $G$ of $\On$.
	\end{remark}

\vspace{1\baselineskip} 
	\section{Geometric Characterization of Irreducible Closed Subgroups}\label{geometry}
	A group $G \subset \On$ is irreducible if the only subspaces $V \subset \mathbb{R}^n$ that are invariant under $G$ (i.e., $gV = V$ for all $g \in G$) are $\{0\}$ and $\mathbb{R}^n$.
	
	In the previous section, we classified  $G$-invariant convex bodies. We now turn to investigating the Minkowski problem. Specifically, when $G$ is an irreducible closed subgroup of $\On$, we study the existence of solutions to the  $G$-invariant $L_{p}$  dual Minkowski problem. The objective of this section is to characterize the geometry of irreducible closed subgroups, which is indispensable for  solving the Minkowski problem using variational methods.

	The following observation is crucial, as it builds a bridge between irreducibility and the Minkowski problem.
	
	\begin{proposition}\label{irr just}
		Let $G$ be a  closed subgroup of $\On$ with $n\ge2$. The following conditions are equivalent:
		\begin{enumerate}
			\item For every  $v \in S^{n-1}$, the orbit $Gv$ is not contained in any closed hemisphere.
		
			\item  $G$  is irreducible (i.e., $\mathbb{R}^n$ has no nontrivial $G$-invariant subspaces).
		\end{enumerate}
	\end{proposition}
	
	\begin{proof}
		\noindent\textbf{(1 $\Rightarrow$ 2):}
		Assume condition $(1)$ holds. Suppose, for contradiction, that $G$ is reducible. Then there exists a nontrivial  subspace $W \subset \mathbb{R}^n$ ($0 < \dim W < n$) that is $G$-invariant ($gW=W$ for all $g\in G$). Since $G \subset \On$, the orthogonal complement $W^\perp$ is also $G$-invariant: If $u\in W^{\perp}$, for any $w\in W$,  since $W$ is $G$-invariant, $g^{-1}w\in W$ for all $g\in G$, then $\langle gu, w \rangle=\langle u, g^{-1}w\rangle=0$. Thus, $gu\in W^{\perp}$.

		Take a unit vector $v \in W$, then $Gv \subset W$ by $G$-invariance. Choose a unit vector $u \in W^\perp$. For all $x \in Gv \subset W$, we have $\langle x, u \rangle = 0$. Thus,
		\[
		Gv \subset \{x \in S^{n-1} : \langle x, u \rangle \geq 0\},
		\]
		contradicting $(1)$. Hence $G$ is irreducible.
		
		\noindent\textbf{(2 $\Rightarrow$ 1):}
		Assume $G$ is irreducible. For any $v\in\Sn$,   irreducibility implies $\text{span}(Gv) = \mathbb{R}^n$; otherwise $\text{span}(Gv)$ would be a nontrivial $G$-invariant subspace.

			Suppose there exists a vector $v \in S^{n-1}$, such that the orbit $Gv$ is  contained in some closed hemisphere, then there exists a  unit vector $u \in S^{n-1}$ such that
			\begin{equation}\label{closed hemi}
		 \langle g v, u \rangle \geq 0 \quad\text{ for all}\quad g \in G.
		 \end{equation}
		 
		 Since $G$ is a compact group, there exists a $G$-invariant probability measure (the Haar measure) $dh$ on $G$.  Let $\bar{v}=\int_G gv dh(g)$. For any $g_{1}\in G$, we have
		 \begin{align*}
		 	g_{1}\bar{v}=&\int_G g_{1}gv dh(g) =\int_G sv dh(g_{1}^{-1}s)\\
		 	=&\int_G sv dh(s)=\bar{v}.
		 \end{align*}
		 In other words, $\bar{v}$ is a fixed point of $G$. 	Since $G$ is irreducible, $n\ge 2$, it has no non-zero fixed points. If there exists a non-zero fixed point $x_1 \in \mathbb{R}^n$ satisfying $gx_1 = x_1$ for all $g \in G$, then the line $\mathbb{R}x_1$ forms a nontrivial $G$-invariant subspace, contradicting the irreducibility. Therefore, 
		 \begin{equation} \label{barv} 
		 	\bar{v}=\int_G gv dh(g)=0.
		 \end{equation}

		By \eqref{closed hemi} and \eqref{barv},     consider the nonnegative continuous function $g \mapsto \langle g  v, u \rangle$. Its integral satisfies:
	\[
	\int_G \langle g  v, u \rangle  dh = \left\langle \int_G g  v  dh, u \right\rangle =\langle \bar{v}, u \rangle = \langle 0, u \rangle = 0.
	\]
		Since $\langle g  v, u \rangle$ is nonnegative, continuous, and the integral vanishes, note that the Haar measure is positive on nonempty open sets. If $\langle g_{0}  v, u \rangle>0$ for some $g_{0}\in G$, then by continuity there exists a neighborhood $U$ of $g_{0}$ such that $\langle g  v, u \rangle>0$ in $U$. This implies $	\int_G \langle g  v, u \rangle  dh >0$,  leading to a contradiction. Thus,  we must have $\langle g  v, u \rangle = 0$ for all $g \in G$. This implies $u \perp \text{span}(Gv) = \mathbb{R}^n$, so $u = 0$, contradicting $|u| = 1$. Hence $Gv$ is not contained in any closed hemisphere, proving $(1)$.
		
	\end{proof}

		From Proposition \ref{irr just}, we observe that when $G$ is irreducible, a $G$-invariant measure $\mu$ is  not concentrated
	on any closed hemisphere.
	
	\begin{corollary}
		Let $G$ be an irreducible closed subgroup of $\On$, $n\ge2$.  If $\mu$ is a $G$-invariant nonzero Borel measure on $\Sn$, then $\mu$ is  not concentrated
		on any closed hemisphere.
	\end{corollary}
	
	\begin{proof}
		Suppose, for contradiction, that $\mu$ is concentrated on a  closed hemisphere $H_u=\{x\in\Sn: \langle x,u \rangle\ge0\}$ for some $u\in\Sn$. Then $\operatorname{supp}(\mu) \subset H_u$.
		Since $\mu$ is $G$-invariant and nonzero, its support $\operatorname{supp}(\mu)$ is a nonempty, closed, $G$-invariant set. Choose any $v \in \operatorname{supp}(\mu)$. By $G$-invariance, the orbit $Gv$ is contained in $\operatorname{supp}(\mu)$, and hence in $H_u$.
		This contradicts Proposition~\ref{irr just}. Therefore, $\mu$ cannot be concentrated on any closed hemisphere.
	\end{proof}
	
This suggests a profound connection between irreducibility and the Minkowski-type problem.

		We need the following lemma.
	\begin{lemma}\label{hemi just}
		Let $G$ be a subgroup of $\On$, $v\in \Sn$. Then
		$Gv$ is not contained in any closed hemisphere  if and only if $0\in \operatorname{int} (\conv (Gv))$.
	\end{lemma}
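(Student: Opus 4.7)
The plan is to recognize this as a direct consequence of the separating/supporting hyperplane theorem applied to the convex hull $\operatorname{conv}(Gv)$. First I would rewrite the hemisphere condition: a closed hemisphere of $S^{n-1}$ has the form $S^{n-1}\cap H_u^+$, where $H_u^+ := \{x \in \mathbb{R}^n : \langle x, u\rangle \geq 0\}$ is a closed half-space whose bounding hyperplane passes through the origin. Since $H_u^+$ is closed and convex, $Gv \subset H_u^+$ if and only if $\operatorname{conv}(Gv) \subset H_u^+$. This reduces the lemma to the equivalent statement: $0 \in \operatorname{int}(\operatorname{conv}(Gv))$ if and only if $\operatorname{conv}(Gv)$ is not contained in any closed half-space with bounding hyperplane through $0$.

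For the direction $(\Leftarrow)$ I would argue by contrapositive. If $\operatorname{conv}(Gv) \subset H_u^+$ for some unit vector $u$, then every neighborhood of $0$ meets the open half-space $\{\langle x, u\rangle < 0\}$, so $0$ cannot lie in the interior of $\operatorname{conv}(Gv)$.

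For $(\Rightarrow)$ I would again use the contrapositive. Assume $0 \notin \operatorname{int}(\operatorname{conv}(Gv))$. Two subcases appear. If $0 \notin \operatorname{conv}(Gv)$, the standard separating hyperplane theorem (applied to the disjoint convex sets $\{0\}$ and $\operatorname{conv}(Gv)$) produces a unit vector $u$ and a constant $c \geq 0$ with $\langle x, u\rangle \geq c \geq 0$ for every $x \in \operatorname{conv}(Gv)$. If $0$ lies on the boundary of $\operatorname{conv}(Gv)$, the supporting hyperplane theorem at $0$ produces a unit vector $u$ with $\langle x, u\rangle \geq 0$ for all $x \in \operatorname{conv}(Gv)$. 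In either case $Gv \subset S^{n-1}\cap H_u^+$, a closed hemisphere, giving the contrapositive.

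The single real step is the invocation of the hyperplane theorems; I do not anticipate any genuine obstacle here, since $\operatorname{conv}(Gv)$ is a convex subset of $\mathbb{R}^n$ and the standard separation theorems apply without any further hypothesis on $G$ (closedness or compactness is not needed for this equivalence).
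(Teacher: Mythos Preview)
Your proposal is correct and follows essentially the same route as the paper: both directions amount to the separation/supporting hyperplane theorem applied to the convex set $\operatorname{conv}(Gv)$ and the origin. The paper phrases the easier direction directly (choosing $x=-\epsilon u\in\operatorname{conv}(Gv)$ to witness $\langle x,u\rangle<0$) rather than via contrapositive, and in the other direction simply asserts the existence of the separating half-space through $0$, but the content is identical to yours.
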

	
	\begin{proof}
		Let	$H_{u}=\{x\in\Sn: \langle x,u \rangle\ge0\}$ denote the closed hemisphere with normal vector $u$. If  $0\in \operatorname{int} (\conv (Gv))$, for any $u\in\Sn$, there exists $x\in \conv(Gv)$ such that $\langle x,u\rangle<0$ (since $B(\epsilon)\subset \conv (Gv)$ for some $\epsilon$, take $x=-\epsilon u$). Let $x=\sum_{i=1}^{m}\lambda_{i}g_{i}v$ where \(g_{i}\in G\),  \(\lambda_{i}\ge0\) and \(\sum_{i=1}^{m}\lambda_{i}=1\). In other words,
		$\sum_{i=1}^{m} \lambda_{i}\langle g_{i}v, u \rangle <0$. Then there exists $g_{i}$ such that $\langle g_{i}v, u \rangle <0$. Therefore, $Gv\not\subset Hu$.
		
	Conversely,	suppose $Gv\not\subset Hu$ for any $u\in\Sn$. If $0\notin \operatorname{int}\conv(Gv)$, then there exists $u\in\Sn$ such that $\conv(Gv)\subset \{x\in\Rn:\langle x,u\rangle\ge0\}$. In other words, $Gv\subset H_{u}$. This leads to a contradiction.
	\end{proof}
	
	
			To solve the Minkowski problem via variational methods, we require uniform estimates for certain geometric structures associated with irreducible closed subgroups $G\subset \On$. From Lemma~\ref{hemi just} and Proposition~\ref{irr just}, we know that for any  $v \in \Sn$, the convex hull of its orbit $\operatorname{conv}(Gv)$  is a convex body containing the origin in its interior. Furthermore, we have:
	
	\begin{lemma}\label{int estimate}
		Let $G$ be an irreducible closed subgroup of $\On$ with $n\ge2$. There exists $r > 0$, independent of $v$, such that for all $v \in S^{n-1}$, the inclusion
		$B(r)\subset \conv (Gv)$ holds.  
	\end{lemma}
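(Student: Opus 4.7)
The plan is to express the inradius of $\conv(Gv)$ as a continuous function of $v$ on the compact set $S^{n-1}$, and then extract a uniform lower bound by compactness. For $v\in S^{n-1}$, set
\[
	f(v) \defeq \min_{u\in S^{n-1}} h_{\conv(Gv)}(u) = \min_{u\in S^{n-1}} \max_{g\in G}\langle gv, u\rangle,
\]
where the minimum and maximum are attained because $S^{n-1}$ and $G$ are compact (recall $G\subset \On$ is closed hence compact). Observe that the standard inclusion $B(r)\subset K$ for $K\in\mathcal{K}^n_o$ is equivalent to $h_K(u)\ge r$ for all $u\in S^{n-1}$, so proving $\inf_{v\in S^{n-1}} f(v) > 0$ yields the desired uniform $r$.

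The first step is to show $f(v) > 0$ for each fixed $v$. This is immediate from the tools already established: by Theorem \ref{irr just}, $Gv$ is not contained in any closed hemisphere, so by Lemma \ref{hemi just}, $0\in \operatorname{int}(\conv(Gv))$, which forces $h_{\conv(Gv)}(u) > 0$ for every $u\in S^{n-1}$, and the minimum over the compact set $S^{n-1}$ is positive.

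The second step is to verify that $f$ is continuous on $S^{n-1}$. The map $(v,u)\mapsto H(v,u)\defeq \max_{g\in G}\langle gv, u\rangle$ is jointly Lipschitz in $(v,u)$: if $g^\star$ attains the maximum for $(v,u)$, then orthogonality of $g^\star$ together with $|v|=|v_0|=|u|=|u_0|=1$ gives
\[
	H(v,u) - H(v_0,u_0) \le \langle g^\star v, u\rangle - \langle g^\star v_0, u_0\rangle \le |v - v_0| + |u - u_0|,
\]
and the symmetric inequality follows by reversing roles. Consequently $H$ is continuous on the compact set $S^{n-1}\times S^{n-1}$, and a standard argument (uniform continuity plus compactness of the minimizing set in $u$) shows the partial minimum $f(v) = \min_u H(v,u)$ is continuous on $S^{n-1}$.

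Combining these two steps, $f$ is a strictly positive continuous function on the compact set $S^{n-1}$, so $r\defeq \min_{v\in S^{n-1}} f(v) > 0$. For every $v\in S^{n-1}$ we then have $h_{\conv(Gv)}(u)\ge f(v)\ge r$ for all $u\in S^{n-1}$, which is precisely $B(r)\subset \conv(Gv)$. I do not anticipate any serious obstacle in this argument; the only point requiring care is the continuity of $f$, and that reduces to the routine Lipschitz estimate above since $G$ acts by isometries.
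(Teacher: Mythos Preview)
Your proof is correct and rests on the same compactness principle as the paper's: both exploit the continuous dependence of $\conv(Gv)$ on $v\in S^{n-1}$ together with the pointwise fact $0\in\operatorname{int}(\conv(Gv))$ from Theorem~\ref{irr just} and Lemma~\ref{hemi just}. The only difference is packaging: the paper argues by contradiction, extracting a subsequence $v_k\to v$ and showing $\conv(Gv_k)\to\conv(Gv)$ in Hausdorff distance, while you proceed directly by writing the inradius as the min--max $f(v)=\min_{u}\max_{g}\langle gv,u\rangle$ and checking its Lipschitz continuity---a slightly cleaner route to the same conclusion.
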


	\begin{proof}
		Assume no such $r$ exists. Then for each $k \in \mathbb{N}$, there exists a unit vector $v_k \in S^{n-1}$ such that:
		\[
		B\left( \frac{1}{k}\right)  \not\subset \operatorname{conv}(Gv_k).
		\]
		Thus there exists $y_k \in B\left( \frac{1}{k}\right) $ with $y_k \notin \operatorname{conv}(Gv_k)$. Since $S^{n-1}$ is compact, $\{v_k\}$ has a convergent subsequence (still denoted by $\{v_k\}$) satisfying $v_k \to v$ for some $v \in S^{n-1}$.	For any $\epsilon > 0$, if $k$ is sufficiently large, we have
		\[
		|v_k - v| < \epsilon \quad \text{and} \quad |g v_k - g v|=|g(v_k - v)|=|v_k - v| < \epsilon \quad \text{for all $g\in G$}.
		\]
		Therefore, for any $x=\sum_{i=1}^{m}\lambda_{i}g_{i}v_{k}\in\conv (Gv_{k})$ where $\lambda_{i}>0$, $g_{i}\in G$ and $\sum_{i=1}^{m} \lambda_{i}=1$, there exists $y=\sum_{i=1}^{m}\lambda_{i}g_{i}v\in \conv (Gv)$ such that
		\begin{align*}
			|x-y|=&|\lambda_{1}(g_{1}v_{k}-g_{1}v)+\cdots+\lambda_{m}(g_{m}v_{k}-g_{m}v)|\\
			\le&\lambda_{1}|g_{1}v_{k}-g_{1}v|+\cdots+\lambda_{m}|g_{m}v_{k}-g_{m}v|\\
			\le&\sum_{i=1}^{m} \lambda_{i}\epsilon =\epsilon.
		\end{align*}
		Similarly, for any $y = \sum_{i=1}^{m} \lambda_i g_i v \in \operatorname{conv}(Gv)$, 
		take $x = \sum_{i=1}^{m} \lambda_i g_i v_k \in \operatorname{conv}(Gv_k)$, then $|x - y| < \epsilon$.
		Then	 the Hausdorff metric satisfies
		$d_H(\operatorname{conv}(Gv_k), \operatorname{conv}(Gv)) \to 0$ as $k \to \infty$.

		By irreducibility of $G$ (Lemma \ref{hemi just} and Proposition \ref{irr just}), $0 \in \operatorname{int}(\operatorname{conv}(Gv))$, so there exists $\delta > 0$ such that:
		\[
		B(\delta) \subset \operatorname{conv}(Gv).
		\]
		Let $C_{k}=\operatorname{conv}(Gv_k)$ and $C=\operatorname{conv}(Gv)$. Since 	$d_H(C_k, C) \to 0$, there exists a large $N$ such that for any $k>N$, the support function satisfies
		\begin{equation}\label{sup}
			\left|h_{C_{k}}(u)-h_{C}(u)\right|<\frac{\delta}{2},\quad \text{for each $u\in\Sn$}.
		\end{equation}
		Since $B(\delta) \subset C$, we have $h_{C}(u)\ge\delta$. Then
		by \eqref{sup},	
		$$h_{C_{k}}(u)>h_{C}(u)-\frac{\delta}{2}\ge \delta-\frac{\delta}{2}=\frac{\delta}{2} \quad \text{for each $u\in\Sn$}.$$
		In other words, 	$B(\frac{\delta}{2}) \subset C_{k}= \operatorname{conv}(Gv_{k})$. Let $k>\max\{N,\frac{2}{\delta}\}$, we have:
		\[
		y_{k}\in B\left( \frac{1}{k}\right)  \subset B\left( \frac{\delta}{2}\right)   \subset \operatorname{conv}(Gv_k),
		\]
		which implies $y_k \in \operatorname{conv}(Gv_k)$, contradicting $y_k \notin \operatorname{conv}(Gv_k)$. Thus the lemma holds.
	\end{proof}
	
	We need the following lemma, which is geometrically evident.

\begin{lemma}\label{Sbound}
	Let \( a > 0 \) and \( S \subset \Sn \). If the vectors in \( S \) are not contained in any closed hemisphere, then
		\[
	K = \bigcap_{u \in S} \{ x \in \mathbb{R}^n : \langle x, u \rangle \leq a \}
	\]
	is a compact convex set.
\end{lemma}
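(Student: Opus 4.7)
The plan is to observe first that $K$ is automatically closed and convex as an intersection of closed half-spaces, so the only real content of the lemma is that $K$ is \emph{bounded}. I would prove boundedness by contradiction, using the hypothesis that $S$ is not contained in any closed hemisphere to rule out any direction of recession.

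More precisely, suppose $K$ is unbounded. Then there exists a sequence $\{x_k\} \subset K$ with $|x_k| \to \infty$. By compactness of $S^{n-1}$, after passing to a subsequence we may assume $x_k/|x_k| \to w$ for some $w \in S^{n-1}$. For each $u \in S$ the defining inequality of $K$ gives
\[
\left\langle \frac{x_k}{|x_k|},\, u \right\rangle \;\leq\; \frac{a}{|x_k|},
\]
and letting $k \to \infty$ yields $\langle w, u\rangle \leq 0$ for every $u \in S$. Equivalently, $\langle -w, u\rangle \geq 0$ for every $u \in S$, which says that $S$ is contained in the closed hemisphere with pole $-w$. This contradicts the hypothesis, so $K$ must be bounded.

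Combining boundedness with the trivial closedness and convexity, $K$ is a compact convex set. There is essentially no obstacle here; the only subtle point is to make sure the extraction of a convergent subsequence of unit vectors uses nothing beyond compactness of $S^{n-1}$, and that the hypothesis on $S$ is used in its contrapositive form (``no recession direction exists'' is the same as ``$S$ is not hemispherically confined''). Note that this argument is independent of whether $S$ is finite, countable, or uncountable, and does not require $S$ to be closed, so it fits the generality in which the lemma is stated. Alternatively, one could invoke Lemma \ref{hemi just} to conclude $0 \in \operatorname{int}(\operatorname{conv} S)$ and derive boundedness from a uniform lower bound $\langle u_i, -w\rangle \geq c > 0$ among finitely many $u_i \in S$ spanning a neighborhood of the origin, but the direct recession-direction argument above is shorter and avoids any appeal to Carath\'eodory-type reductions.
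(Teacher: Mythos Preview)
Your proof is correct and is essentially identical to the paper's own argument: both note that $K$ is trivially closed and convex, then derive a contradiction to boundedness by extracting a limit direction $w$ of a sequence $x_k/|x_k|$ and passing to the limit in $\langle x_k/|x_k|, u\rangle \le a/|x_k|$ to get $\langle w,u\rangle \le 0$ for all $u\in S$, contradicting the hemisphere hypothesis. The only cosmetic difference is that you phrase the final contradiction in terms of the pole $-w$, whereas the paper writes the hemisphere as $\{x:\langle v,x\rangle\le 0\}$.
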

	
	\begin{proof}
	Clearly, $K$ is closed and convex. If $K$ is unbounded, then there exists a sequence $\{x_k\} \subset K$ such that 
	\[
	\lim_{k \to \infty} |x_k| = \infty.
	\]
	Consider the sequence $y_k = \frac{x_k}{|x_k|}$. Since $y_k \in \Sn$ for all $k$,  there exists a convergent subsequence $\{y_{k_j}\}$ such that 
	\[
	\lim_{j \to \infty} y_{k_j} = v \quad \text{for some} \quad v \in S^{n-1}.
	\]
	
	For each $x_{k_j} \in K$ and every $u \in S$, the definition of $K$ implies 
	\[
	\langle x_{k_j}, u \rangle \leq a.
	\]
	Dividing both sides by $|x_{k_j}| > 0$, we obtain 
	\[
	\langle y_{k_j}, u \rangle \leq \frac{a}{|x_{k_j}|}.
	\]
	Taking the limit as $j \to \infty$, the left side converges to $\langle v, u \rangle$, and the right side converges to $0$ (since $|x_{k_j}| \to \infty$). Thus, for every $u \in S$,
	\[
	\langle v, u \rangle \leq 0.
	\]
	This implies that $S$ is contained in the closed hemisphere $\{ x \in S^{n-1} : \langle v, x \rangle \leq 0 \}$. This leads to a contradiction. Therefore, $K$ is compact.
	\end{proof}
	
	Thus,  when the group $G$ is irreducible, by setting $S = Gv$, we obtain:
	\begin{lemma}\label{Kv convex}
			Let $G$ be an irreducible closed subgroup of $\On$, $n\ge2$. For every  $v \in S^{n-1}$ and \( a > 0 \), the set
		\[
		K(v) = \bigcap_{u \in Gv} \{ x \in \mathbb{R}^n : \langle x, u \rangle \leq a \}
		\]
		is a convex body.
	\end{lemma}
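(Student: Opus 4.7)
The plan is to combine the geometric characterization of irreducibility from Theorem \ref{irr just} with the compactness result from Lemma \ref{Sbound}, and then verify the non-emptiness of the interior by a direct Cauchy--Schwarz estimate.

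First, I would verify that the hypothesis of Lemma \ref{Sbound} is satisfied for $S = Gv$. Since $G$ is an irreducible closed subgroup of $\On$ with $n \geq 2$, Theorem \ref{irr just} tells us directly that for every $v \in S^{n-1}$ the orbit $Gv$ is not contained in any closed hemisphere of $S^{n-1}$. Applying Lemma \ref{Sbound} with $S = Gv$ and the given constant $a > 0$ immediately yields that
\[
K(v) = \bigcap_{u \in Gv} \{ x \in \mathbb{R}^n : \langle x, u \rangle \leq a \}
\]
is a compact convex subset of $\mathbb{R}^n$.

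Next, I would show that $K(v)$ has nonempty interior, which is the remaining ingredient needed to upgrade ``compact convex set'' to ``convex body.'' The natural candidate for an interior point is the origin. For any $x \in \mathbb{R}^n$ with $|x| < a$ and any $u \in Gv \subset S^{n-1}$, the Cauchy--Schwarz inequality gives $\langle x, u \rangle \leq |x|\, |u| = |x| < a$. Hence the open ball $\{x : |x| < a\}$ is contained in $K(v)$, so the origin lies in the interior of $K(v)$. Combined with compactness and convexity, this shows $K(v)$ is a convex body, completing the proof.

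I do not expect a main obstacle here; the lemma is essentially a packaging of the preceding results. The only subtlety worth stating explicitly is that Theorem \ref{irr just} requires $G$ to be an irreducible closed subgroup (which is part of the hypothesis) and $n \geq 2$, both of which are assumed. The Cauchy--Schwarz argument for the interior is routine and does not depend on irreducibility, so the two ingredients fit together cleanly.
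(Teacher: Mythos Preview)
Your proposal is correct and follows essentially the same route as the paper: invoke Theorem \ref{irr just} plus Lemma \ref{Sbound} for compactness and convexity, then observe that the ball of radius $a$ sits inside $K(v)$ to get nonempty interior. The paper's proof is terser (it simply writes ``Clearly, $aB\subset K(v)$''), but your explicit Cauchy--Schwarz verification is the content behind that ``clearly.''
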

	
	\begin{proof}
	By	Lemma \ref{Sbound} and Proposition \ref{irr just},  $K(v)$ is a compact convex set.  Clearly, $aB\subset K(v)$, so $K(v)$ is a convex body  with the origin in its interior.
	\end{proof}
	
		 Now we establish a uniform estimate for the upper bound of $K(v)$.
	
	\begin{proposition}\label{unif thm}
		Let $G $ be an irreducible closed subgroup of $\On$ and $a > 0$ fixed. For every  $v \in S^{n-1}$, define
		\begin{equation}\label{Kv}
			K(v) = \bigcap_{u \in Gv} \{ x \in \mathbb{R}^n : \langle x, u \rangle \leq a \}.
		\end{equation}
		There exists $R > 0$ (independent of $v$) such that $K(v) \subset B(R)$ for all  $v\in\Sn$.
	\end{proposition}
	
	\begin{proof}
		Let $r > 0$ be the constant from Lemma \ref{int estimate}, so that $B(r)\subset \conv(Gv)$ for any $v\in \Sn$. For any $0\neq x \in K(v)$,  let $w = x / |x|$. Then we have 
		\[
		h_{\conv(Gv)}(w)=\max_{u \in Gv} \langle w, u \rangle \geq r \quad \text{for all $v\in \Sn$}.
		\]
		Hence, there exists $u_{v}\in Gv$ such that $\langle w, u_{v} \rangle \geq r$.
		Since $x \in K(v)$ and $u_v \in Gv$, from the definition of \eqref{Kv}, we have $\langle x, u_v \rangle \leq a$. Thus
		\[
		a\ge	\langle x, u_v \rangle = |x| \langle w, u_v \rangle \geq |x| r ,
		\]
		which implies:
		\[
		|x| \leq \frac{a}{r}.
		\]
		Taking $R = a / r > 0$ gives the uniform bound $K(v) \subset B(R)$ for all  $v\in \Sn$.
	\end{proof}

	 If $G_1$ is irreducible and $G_1 \subset G_2$, then $G_2$ is also irreducible.
	Next we present several classical examples of finite irreducible groups, some of which have appeared in \cite{BKMZ group}.

	\begin{enumerate}
		\item The symmetry group $G_n$ of a regular $n$-simplex centered at the origin in $\mathbb{R}^n$ 
		(regular triangle, regular tetrahedron, etc.) 
		is isomorphic to the symmetric group $S_{n+1}$. 
		Since $-I \notin G_n$,  Proposition \ref{finite -i} implies that  the class of $G_n$-invariant 
		convex bodies is not contained in $\mathcal{K}^n_e$. 
		
		For example, when $n=2$, the symmetry group of an equilateral triangle consists of 6 elements: identity map,
	rotation by $120^\circ$,
		 rotation by $240^\circ$, and the reflections across the three axes that pass through a vertex and are perpendicular to the opposite side.

		\item The rotation symmetry group (i.e., orientation-preserving symmetries) of a regular $n$-simplex centered at the origin in $\mathbb{R}^n$   is isomorphic to the alternating group $A_{n+1}$.
		For example, the rotation symmetry group $G$ of an equilateral triangle consists of rotations by $0^\circ$, $120^\circ$, and $240^\circ$.
		
		The class of $G$-invariant convex bodies is rich. For instance,  we take a suitable convex arc connecting two vertices of an equilateral triangle, and rotate this arc by $120^\circ$ and $240^\circ$ to form the boundary of a convex body. For detailed constructions, see Construction 3.4 in \cite{BKMZ group}.
		

		\item For  $m\ge3$, the dihedral group $D_{m}$ is the symmetry group of a regular $m$-gon in the plane. If $m$ is odd, then $-I\notin D_{m}$. From Proposition \ref{finite -i}, the class of $D_m$-invariant convex bodies is not a subset of $\mathcal{K}^2_e$.
		
		\item The cyclic group $\mathbb{Z}_m$ is a subgroup of $D_m$ for $m \geq 3$, and it constitutes the rotational symmetry group of a regular $m$-gon (generated by the rotation $\frac{2\pi}{m}$).

		\item 	The symmetry group of the cube \([-1, 1]^n\) in \(\mathbb{R}^n\) is the Weyl group $ (\mathbb{Z}/2\mathbb{Z})^n \rtimes S_n$ (order $2^{n}n!$).
			Each element $(\epsilon_1,\ldots,\epsilon_n; \sigma)$ corresponds to an $n \times n$ generalized permutation matrix (having exactly one nonzero entry $\pm 1$ in each row and column), where
			the permutation $\sigma$ determines the positions of the nonzero entries, and
			the signs $\epsilon_i$ determine whether these nonzero entries are $+1$ or $-1$.
			When \(n \geq 3\) is odd, its rotation symmetry group does not contain \(-I\). 

		\item  Since in $\mathbb{R}^{1}$ there exist no nontrivial subspaces (the only subspaces of $\mathbb{R}^1$ are $\{0\}$ and $\mathbb{R}^1$), any subgroup of $\mathrm{O}(1)$ (namely $\{1\}$ or $\{\pm 1\}$) is automatically irreducible.
	\end{enumerate}
	
	For a finite irreducible group $G\subset \On$ with $-I \notin G$,  the proof of Proposition~\ref{finite -i}  shows that for almost every $x \in \mathbb{R}^n$,  $ -x\notin Gx$. For any such $x$, $\mathrm{conv}(Gx)$ is a convex body that is  not origin-symmetric. Consequently, $\mathrm{conv}(Gx) + tB$ forms a family of $G$-invariant convex bodies that are not origin-symmetric for any $t \geq 0$.

	There have been extensive studies on the algebraic properties of irreducible representations. For example,
	\cite{Zimmermann} classified all irreducible subgroups of $\SOn$, and \cite{Fulton repre} investigated  the irreducible representations of finite groups. \cite{real repre} discussed  the classification of real representations of a finite
	group. In particular,  for a finite group $G$ acting irreducibly on $\mathbb{R}^n$, the endomorphism ring $\operatorname{End}_G( \mathbb{R}^n) $ of $G$-equivariant linear maps is isomorphic to one of 
	$\mathbb{R}$, $\mathbb{C}$, or $\mathbb{H}$  (the quaternions).

	\vspace{1\baselineskip} 
	\section{Existence of Solutions to the $L_{p}$ Dual Minkowski Problem for $q\neq 0$}

	To solve the Minkowski problem via variational methods,  we hope to transform the $L_{p}$ dual Minkowski problem into an optimization problem.

		Let $G$ be an irreducible closed subgroup of $\On$ with $n\ge2$ and $Q$ a $G$-invariant star body in $\Rn$.
	For any nonzero finite Borel measure  $\mu$ on $\Sn$,
	we define  $C^{+}(\Sn)$ as the set of continuous functions $f: \Sn \to (0, \infty)$. If $q\neq0$, $p\neq0$, we introduce a functional $\Phi_{p}: C^{+}(\Sn) \to \R$ given by
	
	\begin{equation}\label{def pneq0}
		\Phi_{p}(f) = \frac{1}{p} \log \int_{S^{n-1}} f^{p}  d\mu - \frac{1}{q} \log \widetilde{V}_{q}([f], Q).
	\end{equation}
	
	If $q\neq0$, $p=0$,  define 	
	
	\begin{equation}\label{defp=0}
		\Phi_{0}(f) = \frac{1}{|\mu|} \int _{S^{n-1}}\log f  d\mu - \frac{1}{q} \log \widetilde{V}_{q}([f], Q),
	\end{equation}
	where $ [f] $ is the Wulff shape generated by $ f $:
	\begin{equation}
		[f]=\left\{x\in \mathbb{R}^{n}: \la x, v \ra \le f(v),\; \text{for all}\; v\in S^{n-1}\right\}.
	\end{equation}
	Note that $\Phi_{p}$ is homogeneous of degree $0$, i.e., for all $\lambda >0$ and $f\in C^{+}(\Sn)$, we have 
	\begin{equation}\label{homo}
			\Phi_{p}(\lambda f) = \Phi_{p}(f).
	\end{equation}

	\begin{lemma}\label{inf sup}
		For $q\neq0$, $p\in\R$, and $f\in C^{+}(\Sn)$, we have
		\begin{equation}
				\Phi_{p}(h_{[f]})\le \Phi_{p}(f).
		\end{equation}
	\end{lemma}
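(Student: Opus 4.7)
The plan is to exploit two standard facts about the Wulff shape and then split by the sign of $p$. First, for any $f\in C^+(S^{n-1})$, the definition \eqref{Wulff shape} immediately yields the pointwise inequality $h_{[f]}(v)\le f(v)$ for all $v\in S^{n-1}$, because any $x\in[f]$ satisfies $\langle x,v\rangle\le f(v)$ and $h_{[f]}(v)$ is the supremum of such inner products. Second, $[\,h_{[f]}\,]=[f]$; one inclusion follows from $h_{[f]}\le f$, and the other from the fact that $[f]\subset\{x:\langle x,v\rangle\le h_{[f]}(v)\}$ by definition of $h_{[f]}$. In particular, the $q$-th dual mixed volume term in $\Phi_p$ is unchanged when $f$ is replaced by $h_{[f]}$:
\begin{equation*}
\widetilde{V}_q([h_{[f]}],Q)=\widetilde{V}_q([f],Q).
\end{equation*}

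Thus the inequality $\Phi_p(h_{[f]})\le\Phi_p(f)$ reduces to comparing only the first terms in \eqref{def pneq0} or \eqref{defp=0}. I would handle the three cases separately.

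For $p>0$, the pointwise inequality $h_{[f]}\le f$ gives $h_{[f]}^p\le f^p$, hence $\int h_{[f]}^p\,d\mu\le\int f^p\,d\mu$, and dividing the logarithm by the positive number $p$ preserves the direction of the inequality. For $p<0$, the inequality $h_{[f]}\le f$ reverses under the negative exponent to $h_{[f]}^p\ge f^p$, but then dividing $\log\int h_{[f]}^p\,d\mu\ge\log\int f^p\,d\mu$ by the negative $p$ flips it back, again yielding the desired inequality. For $p=0$, monotonicity of the logarithm gives $\log h_{[f]}\le\log f$, hence $\frac{1}{|\mu|}\int\log h_{[f]}\,d\mu\le\frac{1}{|\mu|}\int\log f\,d\mu$, which is exactly what is needed in \eqref{defp=0}. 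Combining with the equality of the $\widetilde{V}_q$ terms finishes all three cases.

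There is no substantial obstacle here: the entire argument rests on the tautological property $h_{[f]}\le f$ together with the invariance $[h_{[f]}]=[f]$. The only point requiring slight care is the sign bookkeeping when $p<0$, where one must remember that dividing a logarithmic inequality by a negative number reverses its direction; keeping track of this is what ensures a unified statement across $p\in\mathbb{R}$. Note also that the homogeneity \eqref{homo} plays no role in this particular lemma, although it will likely be used elsewhere to normalize minimizers of $\Phi_p$.
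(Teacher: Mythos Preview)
Your proposal is correct and follows essentially the same approach as the paper's proof: both arguments rest on the pointwise inequality $h_{[f]}\le f$, the invariance $[h_{[f]}]=[f]$ of the Wulff shape, and a straightforward case split on the sign of $p$. If anything, your write-up is slightly more detailed in justifying the equality $[h_{[f]}]=[f]$ and in tracking the sign reversal when $p<0$.
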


	\begin{proof}
		From the definition of the Wulff shape, we have $0<h_{[f]}\le f$ and $[h_{[f]}]=[f]$. Therefore, if $p<0$,
		$$h_{[f]}^{p}\ge f^{p}, \quad \quad \frac{1}{p} \log \int_{S^{n-1}} h_{[f]}^{p}  d\mu \le \frac{1}{p} \log \int_{S^{n-1}} f^{p}  d\mu.$$
		If $p>0$,
		$$h_{[f]}^{p}\le f^{p}, \quad \quad \frac{1}{p} \log \int_{S^{n-1}} h_{[f]}^{p}  d\mu \le \frac{1}{p} \log \int_{S^{n-1}} f^{p}  d\mu.$$
		If $p=0$,
		$$h_{[f]}\le f,\quad \quad \int _{S^{n-1}}\log h_{[f]} d\mu \le \int _{S^{n-1}}\log f d\mu.$$
		From  definitions \eqref{def pneq0} and \eqref{defp=0}, the result is proved.
	\end{proof}
	
	Define the set $\mathcal{L}$ as:
	\begin{equation}\label{L}
		\mathcal{L}=\{f\in C^{+}(\Sn): \text{$f$ is $G$-invariant and $\widetilde{V}_{q}([f], Q)=1$ }\}.
	\end{equation}
	By  definitions \eqref{def pneq0} and \eqref{defp=0}, for $f\in\mathcal{L}$, we have that when $p\neq 0$,
		\begin{equation}\label{Lpneq0}
		\Phi_{p}(f) = \frac{1}{p} \log \int_{S^{n-1}} f^{p}  d\mu,
	\end{equation}
	and when $p=0$, 
	\begin{equation}\label{Lq=0}
		\Phi_{0}(f) = \frac{1}{|\mu|} \int _{S^{n-1}}\log f  d\mu.
	\end{equation}

	Due to the homogeneity of $\Phi_{p}$ in \eqref{homo}, we can restrict the optimization problem for $G$-invariant functions to the set $\mathcal{L}$, where local extrema become global. This is a standard approach for homogeneous Minkowski problems. In contrast, for non-homogeneous cases like the Gaussian Minkowski problem, see, e.g., \cite{Huang2021,Liu2022,shan2025}.

	Note that, if $f\in C^{+}(\Sn)$ is a $G$-invariant function, then the Wulff shape $[f]$ is clearly a $G$-invariant convex body. Since for any $g\in G$,
	\begin{align*}
		g[f]=&\left\{gx\in \mathbb{R}^{n}: \la x, u \ra \le f(u),\;  u\in S^{n-1}\right\}\\
		=&\left\{y\in \mathbb{R}^{n}: \la g^{-1}y, u \ra \le f(u),\;  u\in S^{n-1}\right\}\\
		=&\left\{y\in \mathbb{R}^{n}: \la y, gu \ra \le f(gu),\;  u\in S^{n-1}\right\}\\
				=&\left\{y\in \mathbb{R}^{n}: \la y, v \ra \le f(v),\;  v\in S^{n-1}\right\}=[f].
	\end{align*}
	Conversely, if $K$ is a $G$-invariant convex body, then its support function is naturally $G$-invariant from \eqref{eq:support-transform}.
	
	Next, we consider the minimization problem for the functional $\Phi_{p}$.
	
	\begin{lemma}\label{unif bound}
		Let $q\neq 0$, $p\in\R$. $\{K_i\}$ is a family of $G$-invariant convex bodies. If $h_{K_{i}}\in \mathcal{L}$ such that $\lim_{i \to \infty} R_{i} = +\infty$, where  $R_{i}=\max\{|x|: x\in K_{i}\}$, then
		\begin{equation}
		\lim_{i \to \infty} \Phi_{p}(h_{K_{i}}) = +\infty.
		\end{equation}
	\end{lemma}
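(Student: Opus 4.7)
The plan is to produce a uniform lower bound $h_{K_i}(u) \geq r R_i$ on $S^{n-1}$ using $G$-invariance of $K_i$ together with Lemma \ref{int estimate}, and then to substitute this bound into the simplified expressions \eqref{Lpneq0} and \eqref{Lq=0} of $\Phi_p$ on $\mathcal{L}$.

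For the key geometric step, I would first pick, for each $i$, a point $x_i \in K_i$ with $|x_i| = R_i$ (which exists by compactness) and set $v_i = x_i / R_i \in S^{n-1}$. The $G$-invariance of $K_i$ gives $G x_i \subset K_i$, and convexity then yields $R_i \operatorname{conv}(G v_i) = \operatorname{conv}(G x_i) \subset K_i$. Applying Lemma \ref{int estimate} provides a constant $r > 0$, independent of $v_i$, with $B(r) \subset \operatorname{conv}(G v_i)$, so $B(r R_i) \subset K_i$ and consequently
\[
h_{K_i}(u) \geq r R_i \quad \text{for every } u \in S^{n-1}.
\]
This is the central estimate, and irreducibility of $G$ is used precisely here (via Theorem \ref{irr just} and Lemma \ref{int estimate}).

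Next, since $h_{K_i}\in\mathcal{L}$, the constraint $\widetilde{V}_q([h_{K_i}],Q)=1$ makes the $-\tfrac{1}{q}\log\widetilde{V}_q$ term of $\Phi_p$ vanish, so \eqref{Lpneq0}--\eqref{Lq=0} reduce $\Phi_p(h_{K_i})$ to a single integral. I then split into three cases. For $p>0$, monotonicity of $t\mapsto t^p$ gives $\int h_{K_i}^{p}\,d\mu \geq (rR_i)^p|\mu|$, hence
\[
\Phi_p(h_{K_i}) \;\geq\; \log(rR_i) + \tfrac{1}{p}\log|\mu| \;\longrightarrow\; +\infty.
\]
For $p<0$, $t\mapsto t^p$ is decreasing, so $\int h_{K_i}^{p}\,d\mu \leq (rR_i)^p|\mu|$; taking logarithms and dividing by the negative constant $p$ reverses the inequality and produces the same lower bound $\log(rR_i)+\tfrac{1}{p}\log|\mu|\to+\infty$. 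For $p=0$, directly $\Phi_0(h_{K_i}) \geq \log(rR_i)\to+\infty$.

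There is no serious obstacle in this argument; the entire burden lies in the uniform interior ball estimate, which is already established in Lemma \ref{int estimate}. The parameters $q>0$ and the star body $Q$ play no direct role beyond enforcing the normalization, and the case distinction on the sign of $p$ is routine. The point is that the $G$-invariance of $K_i$, combined with irreducibility of $G$, forces $K_i$ to swell uniformly in every direction when any single point escapes to infinity, and this uniform swelling is exactly what makes $\Phi_p$ blow up.
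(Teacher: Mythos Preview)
Your proof is correct and follows the same overall strategy as the paper: obtain a uniform lower bound on $h_{K_i}$ from $G$-invariance and irreducibility, then substitute into the simplified expressions \eqref{Lpneq0}--\eqref{Lq=0} for $\Phi_p$ on $\mathcal{L}$ and split on the sign of $p$.

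The execution differs in one respect worth noting. You invoke Lemma~\ref{int estimate} directly: from $R_i\operatorname{conv}(Gv_i)\subset K_i$ and $B(r)\subset\operatorname{conv}(Gv_i)$ you get $B(rR_i)\subset K_i$, hence the linear bound $h_{K_i}\geq rR_i$. The paper instead passes through the dual statement, Theorem~\ref{unif thm} (which is itself derived from Lemma~\ref{int estimate}), and argues via the polar-type sets $K(v)$ to conclude only $h_{K_i}(u)\geq R_i^{1/2}$. Your route is more direct and yields a sharper bound; the paper's detour through $K(v)$ is logically equivalent but slightly more roundabout. Either bound tends to infinity with $R_i$, so both suffice for the conclusion.
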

	
	\begin{proof}
	From Proposition \ref{unif thm} with $a=1$, there exists $m>0$ such that 
	\begin{equation*}
		K(v) = \bigcap_{u \in Gv} \{ x \in \mathbb{R}^n : \langle x, u \rangle \leq  1 \} \subset mB
	\end{equation*}
	 for every  $v \in S^{n-1}$. Then 
	 \begin{equation}\label{KvB}
	  \bigcap_{u \in Gv} \{ x \in \mathbb{R}^n : \langle x, u \rangle \leq   \frac{1}{2m} \} \subset \frac{1}{2}B
	 \end{equation}
	 for every  $v \in S^{n-1}$. Since $\lim_{i \to \infty} R_{i} = +\infty$, there exists a large $N$ such that for $i>N$, $R_{i}^{-\frac{1}{2}}< \frac{1}{2m}$. We choose $v_{i}\in\Sn$ such that $R_{i}v_{i}\in K_{i}$, by \eqref{KvB},
	 \begin{equation}\label{inclusion}
\bigcap_{u \in Gv_{i}} \{ x \in \mathbb{R}^n : \langle x, u \rangle \leq  R_{i}^{-\frac{1}{2}}  \}\subset	 	 \bigcap_{u \in Gv_{i}} \{ x \in \mathbb{R}^n : \langle x, u \rangle \leq   \frac{1}{2m} \} \subset \frac{1}{2}B.
	 \end{equation} 
	 Therefore, for $i>N$, from \eqref{inclusion}, if $u\in\Sn$, then 
	 \begin{equation*}
	 	u\notin \bigcap_{w \in Gv_{i}} \{ x \in \mathbb{R}^n : \langle x, w \rangle \leq  R_{i}^{-\frac{1}{2}}  \}.
	 \end{equation*}
	 In other words, for any $u\in\Sn$, there exists $w\in G v_{i} $ such that $\la u, w\ra>R_{i}^{-\frac{1}{2}} $. Since $K_{i}$ is $G$-invariant, we have $R_{i}w\in K_{i}$. Hence, 
	 \begin{equation}\label{sup low}
	 	h_{K_{i}}(u)\ge \la u, R_{i}w \ra=R_{i}\la u, w \ra\ge R_{i}^{\frac{1}{2}},\quad \text{$\forall u\in\Sn$}.
	 \end{equation}
	 
	 Therefore, by \eqref{sup low}, if $p> 0$,
	 \begin{equation*}
	 	 \log \int_{S^{n-1}} h_{K_{i}}^{p}  d\mu\ge 	 \log \int_{S^{n-1}} R_{i}^{\frac{p}{2}}  d\mu \rightarrow +\infty,
	 \end{equation*}
	 which implies 
	 \begin{equation}\label{phi1}
	 	\Phi_{p}(h_{K_{i}}) = \frac{1}{p} \log \int_{S^{n-1}} h_{K_{i}}^{p}  d\mu\rightarrow +\infty.
	 \end{equation}
	 
	 If $p<0$, 
	  \begin{equation*}
	 	\log \int_{S^{n-1}} h_{K_{i}}^{p}  d\mu\le 	 \log \int_{S^{n-1}} R_{i}^{-\frac{|p|}{2}}  d\mu \rightarrow -\infty,
	 \end{equation*}
	 which implies 
	 \begin{equation}\label{phi2}
	 	\Phi_{p}(h_{K_{i}}) = \frac{1}{p} \log \int_{S^{n-1}} h_{K_{i}}^{p}  d\mu\rightarrow +\infty.
	 \end{equation}
	 
	 If $p=0$
	 \begin{equation}\label{phi3}
	 	\Phi_{0}(h_{K_{i}}) = \frac{1}{|\mu|} \int _{S^{n-1}}\log h_{K_{i}}  d\mu \ge \frac{1}{|\mu|} \int_{S^{n-1}} \log R_{i}^{\frac{1}{2}}  d\mu \rightarrow +\infty.
	 \end{equation}
	 Combining \eqref{phi1}, \eqref{phi2} and \eqref{phi3}, we complete the proof.
	\end{proof} 

	We require the following estimate concerning degeneration.
	
	\begin{lemma}\label{degener}
		 Let \(K\) be a \(G\)-invariant nonempty compact convex set in \(\mathbb{R}^n\). If the support function \(h_K(u_{0}) = 0\) for some \(u_{0} \in S^{n-1}\), then \(K = \{0\}\).
		
	\end{lemma}
	
	\begin{proof}
	 The condition \(h_K(u_{0}) = 0\) implies that \(\langle x, u_{0} \rangle \leq 0\) for all \(x \in K\).
		Since 
		\(K\) is \(G\)-invariant, by \eqref{eq:support-transform}, we have for any \(g \in G\):
		\[
		h_K(gu_{0}) =  h_{g^{-1}K}(u_{0})
		= h_K(u_{0}) = 0.
		\]
	 Thus, \(\langle x, gu_{0} \rangle \leq 0\) for all \(x \in K\) and all \(g \in G\).

		Now define the set:
		\[
		L = 
		\{ x \in \mathbb{R}^n: \langle x, gu_{0} \rangle \leq 0 \text{ for all } g \in G \}.
				\]
		By the above, 
		\(K \subset L\). 
		Suppose there exists a nonzero vector \( y \in L \). Then \( \lambda y \in L \) for any \( \lambda > 0 \), and hence we may assume \( y \in \Sn \). The condition \( y \in L \) implies that for every \( g \in G \),  \( \langle y, gu_{0} \rangle \leq 0 \); in other words, the orbit \( Gu_{0} \) lies in the closed hemisphere in the direction of \( y \). But $G$ is irreducible. This contradicts Proposition~\ref{irr just}. Therefore, \(L = \{0\}\).

		Since 
		\(K \subset L = \{0\}\) and \(K\) is nonempty, we conclude that \(K = \{0\}\).
	\end{proof}
	

Since $G$ is irreducible and $n\ge 2$, it has no non-zero fixed points.  If there exists a non-zero fixed point $x_1 \in \mathbb{R}^n$, then the line $\mathbb{R}x_1$ forms a nontrivial $G$-invariant subspace, contradicting the irreducibility.  By Proposition \ref{int1}, $\mathcal{K}_{G}\subset \mathcal{K}^{n}_{o}$.

Therefore, we can obtain the existence of solutions to the minimization problem for $\Phi_{p}$.

\begin{proposition}\label{min exsitence}
Let $q\neq0$, $p\in\R$.	There exists a $G$-invariant convex body  $K$  such that 
	\begin{equation}\label{minget}
			\Phi_{p}(h_{K})=\min \{\Phi_{p}(f): f\in \mathcal{L}\}.
	\end{equation}
\end{proposition}

\begin{proof}
	From Lemma \ref{inf sup}, we can assume that $\{h_{K_{i}}\}\subset\mathcal{L}$ is a minimizing sequence such that 
	$$\lim_{i \to \infty}\Phi_{p}(h_{K_{i}})=\inf \{\Phi_{p}(f): f\in \mathcal{L}\}.$$
	By Lemma \ref{unif bound}, if  $\lim_{i \to \infty} R_{i} = +\infty$ (where  $R_{i}=\max\{|x|: x\in K_{i}\}$), then
	$\lim_{i \to \infty} \Phi_{p}(h_{K_{i}}) = +\infty.$ This contradicts the  fact that $\{\Phi_{p}(h_{K_{i}})\}$ converges to the infimum.  Therefore, there exists a constant $R>0$ such that
	\begin{equation} \label{ll}
	K_{i}\subset RB
	\end{equation}
	for all $i$.  Combining \eqref{ll} with the   Blaschke selection theorem, we can find a subsequence, still denoted by $\{K_{i}\}$, such that $K_{i}\rightarrow K$ for some compact convex set $K$. For any $g\in G$, since $G \subset \On$ and  each $K_{i}$ is $G$-invariant, we have $d_{H}(K_{i}, gK)=d_{H}(gK_{i}, gK)=d_{H}(K_{i}, K)\rightarrow 0$. Therefore, $gK = K$, and thus $K$ is also $G$-invariant.  The $G$-invariant convex bodies $K_i$ satisfy $0 \in \operatorname{int} K_i$. Since $K_i \to K$, it follows that $0 \in K$. We now prove that $0 \in \operatorname{int} K$. If $0 \in \partial K$, then there exists $u \in S^{n-1}$ such that $h_K(u) = 0$. From Lemma \ref{degener}, $K = \{0\}$. Therefore, for any small $\epsilon > 0$, there always exists $K_i \subset \epsilon B$ for sufficiently large $i$, which contradicts $V_q(K_i, Q) = 1$. Thus, $K$ is a  $G$-invariant convex body.
	
	 By the continuity of the dual mixed volume in Lemma \ref{continuity}, we have $h_{K}\in\mathcal{L}$. Hence, $$\lim_{i \to \infty}\Phi_{p}(h_{K_{i}})=\Phi_{p}(h_{K})=\min \{\Phi_{p}(f): f\in \mathcal{L}\}.$$ 
\end{proof}

The $G$-invariance of the $(p, q)$-th dual curvature measure will be utilized:
\begin{lemma}\label{measure G invariant}
	Let $K$ be a $G$-invariant convex body, and $Q$ a $G$-invariant star body. Then the measure $\widetilde{C}_{p,q}(K,Q, \cdot)$ is $G$-invariant.
\end{lemma}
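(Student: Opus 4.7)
The plan is to derive the $G$-invariance of $\widetilde{C}_{p,q}(K,Q,\cdot)$ directly from the transformation formula \eqref{measure trans}. Fix $g \in G \subset \On$; since $g$ is orthogonal, $g^{-T} = g$, so for every $v \in \Sn$ we have $|g^{-T}v| = 1$ and $g^{-T}v/|g^{-T}v| = gv$. The hypotheses give $gK = K$ and $gQ = Q$, so substituting $\phi = g$ into \eqref{measure trans} reduces it to
\begin{equation*}
\int_{\Sn} f(v)\, d\widetilde{C}_{p,q}(K, Q, v) = \int_{\Sn} f(gv)\, d\widetilde{C}_{p,q}(K, Q, v)
\end{equation*}
for every continuous $f:\Sn \to \mathbb{R}$.

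Writing $\mu = \widetilde{C}_{p,q}(K,Q,\cdot)$, this identity says $\int f\,d\mu = \int f\,d(g_*\mu)$ for every $f \in C(\Sn)$, so by the Riesz representation theorem $\mu = g_*\mu$, i.e., $\mu(g^{-1}E) = \mu(E)$ for every Borel $E \subset \Sn$. As $g$ ranges over the group $G$, this is equivalent to $\mu(gE) = \mu(E)$ for every $g \in G$ and every Borel $E$, which is the $G$-invariance of $\mu$.

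The only point requiring care is that \eqref{measure trans} is quoted from \cite{HLYZ2018} for $\phi \in \mathrm{SL}(n,\mathbb{R})$, whereas elements of $\On$ may have $\det = -1$. This is not a real obstacle: the derivation uses only $|\det \phi| = 1$ together with the change-of-variables formula, so the identity extends verbatim to $\On$. Alternatively, one can bypass \eqref{measure trans} entirely by arguing from the integral representation $\widetilde{C}_q(K,Q,\eta) = \frac{1}{n}\int_{\alpha_K^*(\eta)} \rho_K^q(u)\rho_Q^{n-q}(u)\,du$ and the relation $d\widetilde{C}_{p,q}(K,Q,\cdot) = h_K^{-p}\,d\widetilde{C}_q(K,Q,\cdot)$: the $G$-invariance of the functions $\rho_K$, $\rho_Q$, $h_K$ follows from \eqref{eq:support-transform} and \eqref{eq:radial-transform}, the equivariance $\nu_K(gx) = g\nu_K(x)$ for $g \in \On$ yields $\alpha_K^*(gE) = g\alpha_K^*(E)$, and a change of variable $u \mapsto gu$ on $\Sn$ (using that spherical Lebesgue measure is $\On$-invariant) closes the argument. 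Either route is routine once the geometric ingredients are noted; no substantive obstacle arises.
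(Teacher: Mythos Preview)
Your proof is correct and follows essentially the same route as the paper: both derive the invariance from the transformation formula \eqref{measure trans} applied with $\phi\in G$, using that $|\phi^{-T}v|=1$ for orthogonal $\phi$. The paper simply plugs in $f=1_\eta$ directly rather than passing through continuous test functions and Riesz, and it does not comment on the $\mathrm{SL}(n)$ versus $\mathrm{O}(n)$ issue you raise; your treatment is slightly more careful on both points but the argument is the same.
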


\begin{proof}
For any Borel set $\eta\subset \Sn$,	let $f(v)=1_{\eta}(v)$ in \eqref{measure trans}. For any $g \in G$, since $K$ and $Q$ are $G$-invariant, we have $g^{-1} K = K$ and $g^{-1} Q = Q$.
 Then,
	\begin{align*}
\widetilde{C}_{p,q}(K,Q, \eta)&=	\int_{S^{n-1}} 1_{\eta}(v) \, d\widetilde{C}_{p,q}(g^{-1} K, g^{-1} Q, v)\\
& = \int_{S^{n-1}} 1_{\eta} \left( g^{-1}v \right) \, d\widetilde{C}_{p,q}(K, Q, v)\\
&=\widetilde{C}_{p,q}(K,Q, g\eta).
\end{align*}
\end{proof}

We need  the following lemma  about group invariant measures from \cite{BKMZ group}:

\begin{lemma}[Lemma 5.1, \cite{BKMZ group}]\label{unique}
	Let $G$ be a closed subgroup of $\On$, $n \ge 2$. If $\mu_1$ and $\mu_2$ are $G$-invariant finite Borel measures on $S^{n-1}$, then $\mu_1 = \mu_2$ if and only if $\int_{S^{n-1}} g d\mu_1 = \int_{S^{n-1}} g d\mu_2$ for any $G$-invariant continuous function $g : S^{n-1} \to \mathbb{R}$.
\end{lemma}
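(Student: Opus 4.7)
The forward implication is immediate, so the content is in the reverse direction. The plan is to use the $G$-averaging (Reynolds) operator supplied by the Haar probability measure $dh$ on the compact group $G$: for any continuous $f : S^{n-1} \to \mathbb{R}$, define
\[
\bar{f}(v) = \int_G f(gv)\,dh(g).
\]
Since $G$ acts continuously on $S^{n-1}$ and $G$ is compact, $\bar{f}$ is continuous, and the left-invariance of $dh$ gives $\bar{f}(g_0 v) = \bar{f}(v)$ for all $g_0 \in G$, so $\bar{f}$ is $G$-invariant.

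Next I would show that for a $G$-invariant finite Borel measure $\mu$ on $S^{n-1}$ and any continuous $f$,
\[
\int_{S^{n-1}} f\, d\mu = \int_{S^{n-1}} \bar{f}\, d\mu.
\]
This follows from Fubini's theorem (applicable because $f$ is bounded and $G \times S^{n-1}$ carries the product of two finite measures):
\[
\int_{S^{n-1}} \bar{f}(v)\, d\mu(v) = \int_G \!\!\int_{S^{n-1}} f(gv)\, d\mu(v)\, dh(g) = \int_G \!\!\int_{S^{n-1}} f(v)\, d\mu(v)\, dh(g) = \int_{S^{n-1}} f\, d\mu,
\]
where the middle equality uses $G$-invariance of $\mu$ (the pushforward of $\mu$ under $v \mapsto gv$ equals $\mu$) and the last uses that $dh$ is a probability measure.

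Combining these two observations with the hypothesis, for any continuous $f : S^{n-1} \to \mathbb{R}$,
\[
\int_{S^{n-1}} f\, d\mu_1 = \int_{S^{n-1}} \bar{f}\, d\mu_1 = \int_{S^{n-1}} \bar{f}\, d\mu_2 = \int_{S^{n-1}} f\, d\mu_2,
\]
the middle equality being the hypothesis applied to the $G$-invariant continuous function $\bar{f}$. Since $S^{n-1}$ is a compact Hausdorff space, the Riesz representation theorem then yields $\mu_1 = \mu_2$.

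The only delicate point is verifying the two standard facts underlying the argument, namely the existence of Haar measure on the compact group $G$ and the continuity of $\bar{f}$; both are classical (the former because closed subgroups of $\On$ are compact Lie groups, the latter because $(g,v) \mapsto f(gv)$ is uniformly continuous on the compact set $G \times S^{n-1}$). No other obstacle arises, since passing from continuous test functions to equality of measures is handled by Riesz representation on a compact Hausdorff space.
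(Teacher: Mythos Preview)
The paper does not supply its own proof of this lemma; it is quoted verbatim from \cite{BKMZ group} (Lemma 5.1 there) and invoked as a black box. Your argument via the Haar-measure averaging operator is correct and is the standard route: averaging $f$ to a $G$-invariant continuous $\bar f$, using Fubini plus $G$-invariance of $\mu$ to get $\int f\,d\mu=\int\bar f\,d\mu$, and then concluding by Riesz representation. There is nothing to compare against within this paper, but your proof stands on its own.
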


We now demonstrate that the minimizer $K$ of the optimization problem yields a solution (up to a constant factor) to the corresponding $L_p$ dual Minkowski problem.

\begin{theorem}\label{closed minkowski}
	Let $q \neq 0$, $p \in \mathbb{R}$, $G$ be an  irreducible closed subgroup of $\mathrm{O}(n)$ with $n \geq 2$, and $Q$ be a $G$-invariant star body in $\mathbb{R}^n$. For any non-zero finite Borel measure $\mu$ on $S^{n-1}$, $\mu$ is $G$-invariant if and only if there exists a $G$-invariant convex body $K$ in $\mathbb{R}^n$ such that
	 $$\mu=\widetilde{C}_{p,q}(K, Q,\cdot),\quad \text{$p\neq q$,}\quad \text{and} \quad \mu=\left( \int_{S^{n-1}}h_{K}^{p}d\mu\right) \cdot \widetilde{C}_{p,q}(K, Q, \cdot),\quad\text{$p=q$}.$$
\end{theorem}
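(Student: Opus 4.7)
My plan is to deduce the theorem from the variational machinery already developed: produce a minimizer via Theorem \ref{min exsitence}, take a first variation of $\Phi_p$ along $G$-invariant directions using the dual volume variation formula \eqref{dual vari}, and then recognize the resulting Euler--Lagrange identity as the desired Minkowski equation via Lemma \ref{unique}.

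Necessity is immediate from Lemma \ref{measure G invariant}. For sufficiency, let $K$ be the $G$-invariant minimizer supplied by Theorem \ref{min exsitence}, so $h_K\in\mathcal{L}$. Because $\Phi_p$ is $0$-homogeneous by \eqref{homo}, $h_K$ in fact minimizes $\Phi_p$ over all $G$-invariant $f\in C^+(S^{n-1})$ without any normalization constraint. For any $G$-invariant $\phi\in C(S^{n-1})$ and $|t|$ small, $h_K+t\phi$ is a legal competitor, so $\frac{d}{dt}\big|_{t=0}\Phi_p(h_K+t\phi)=0$. Using \eqref{dual vari} together with $[h_K]=K$ and $\widetilde{V}_q(K,Q)=1$, for $p\neq 0$ this yields
\[
\int_{S^{n-1}} h_K^{p-1}\phi\,d\mu \;=\; \Big(\int_{S^{n-1}} h_K^p\,d\mu\Big)\int_{S^{n-1}} \frac{\phi}{h_K}\,d\widetilde{C}_q(K,Q,v).
\]
Rewriting the right side via $d\widetilde{C}_q = h_K^p\,d\widetilde{C}_{p,q}$ and then substituting the $G$-invariant test function $\phi=\psi\, h_K^{1-p}$ (so that $\psi$ runs over all $G$-invariant continuous functions) gives
\[
\int_{S^{n-1}} \psi\,d\mu \;=\; \Big(\int_{S^{n-1}} h_K^p\,d\mu\Big)\int_{S^{n-1}} \psi\,d\widetilde{C}_{p,q}(K,Q,v).
\]
Both measures are $G$-invariant (the one on the right by Lemma \ref{measure G invariant}), so Lemma \ref{unique} forces $\mu=\Big(\int_{S^{n-1}} h_K^p\,d\mu\Big)\widetilde{C}_{p,q}(K,Q,\cdot)$. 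The case $p=0$ runs on the same template using \eqref{defp=0} and yields $\mu=|\mu|\,\widetilde{C}_{0,q}(K,Q,\cdot)$.

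This is already the desired $p=q$ statement. For $p\neq q$ the constant is absorbed by rescaling $K\mapsto \lambda K$ with $\lambda=(\int_{S^{n-1}} h_K^p\,d\mu)^{1/(q-p)}$: the scaling relations $h_{\lambda K}=\lambda h_K$, $\widetilde{V}_q(\lambda K,Q)=\lambda^q\widetilde{V}_q(K,Q)$ and $\widetilde{C}_{p,q}(\lambda K,Q,\cdot)=\lambda^{q-p}\widetilde{C}_{p,q}(K,Q,\cdot)$ then produce $\mu=\widetilde{C}_{p,q}(\lambda K,Q,\cdot)$, with $\lambda K$ still $G$-invariant. The main obstacle I anticipate is the passage from the Euler--Lagrange identity---which a priori controls only integrals against $G$-invariant test functions---to an honest equality of measures; this is precisely what Lemma \ref{unique} provides, and without it the variational argument restricted to $G$-invariant data could not distinguish the measures themselves. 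The auxiliary issues (positivity of $h_K+t\phi$ for small $t$, applicability of \eqref{dual vari}, and checking the scaling identities for $\widetilde{C}_{p,q}$) are routine.
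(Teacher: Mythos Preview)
Your proposal is correct and follows essentially the same approach as the paper: obtain the minimizer from Theorem \ref{min exsitence}, use $0$-homogeneity to upgrade it to a global minimizer over all $G$-invariant $f$, differentiate $\Phi_p(h_K+t\phi)$ at $t=0$ via \eqref{dual vari}, substitute $\phi=\psi\,h_K^{1-p}$, and conclude via Lemmas \ref{measure G invariant} and \ref{unique}, with the final rescaling when $p\neq q$. The paper proceeds in exactly this order with the same ingredients.
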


\begin{proof}
	Let $h_{K}$ be the minimizer of functional $\Phi_{p}$ in Proposition \ref{min exsitence} with $\widetilde{V}_{q}(K,Q)=1$.
By the homogeneity of $\Phi_{p}$ in \eqref{homo}, $h_K$ is the global minimizer of $\Phi_{p}$.	For any continuous $G$-invariant function $f:\Sn\rightarrow \mathbb{R}$,   the function $\left( h_{K}+tf\right) $ belongs to $C^{+}(\Sn)$ and is $G$-invariant for sufficiently small $|t|$.  Let $g(t)=\Phi_{p}(h_{K}+tf)$ for sufficiently small $|t|$.
	From \eqref{minget}, we obtain
	\begin{equation}\label{ft}
		g(t)=\Phi_{p}(h_{K}+tf)\ge \Phi_{p}(h_{K})=g(0).
	\end{equation}
	
	If $p\neq 0$, by \eqref{def pneq0},
	\begin{equation*}
		g(t) = \frac{1}{p} \log \int_{S^{n-1}} (h_{K}+tf)^{p}  d\mu - \frac{1}{q} \log \widetilde{V}_{q}([h_{K}+tf], Q).
	\end{equation*}
	Therefore, by \eqref{ft} and \eqref{dual vari}, if $p\neq 0$,  
	
	\begin{equation*}
			0=\left.\frac{d}{d t}\right|_{t=0}	g(t)=\dfrac{\int_{S^{n-1}}fh_{K}^{p-1}d\mu}{\int_{S^{n-1}}h_{K}^{p}d\mu}-\int_{S^{n-1}}\frac{f}{h_{K}}d\widetilde{C}_{q}(K,Q).
	\end{equation*}
	In other words,
	\begin{equation}\label{=}
		\int_{S^{n-1}}fh_{K}^{p-1}d\mu=\lambda \int_{S^{n-1}}\frac{f}{h_{K}}d\widetilde{C}_{q}(K,Q)
	\end{equation}
	for $\lambda=\int_{S^{n-1}}h_{K}^{p}d\mu$. Therefore, for any continuous $G$-invariant function $\phi$, since $K$ is $G$-invariant, the function $f=\frac{\phi}{h_{K}^{p-1}}$ is also $G$-invariant. Substituting into \eqref{=} and applying \eqref{pqdef}, we have
	\begin{equation}\label{subs}
		\int_{S^{n-1}}\phi d\mu=\lambda \int_{S^{n-1}}\phi h_{K}^{-p}d\widetilde{C}_{q}(K,Q)=\lambda\int_{S^{n-1}}\phi d\widetilde{C}_{p,q}(K,Q).
	\end{equation}
	Then by \eqref{subs}, Lemma \ref{measure G invariant}, and Lemma \ref{unique}, we obtain $\mu=\lambda \widetilde{C}_{p,q}(K,Q,\cdot)$.	
When $p\neq q$, we use the $(q-p)$-th homogeneity of $\widetilde{C}_{p,q}(K, Q, \cdot)$ in $K$. Defining $K' = \lambda^{\frac{1}{q-p}} K$, we obtain 
	$\mu= \widetilde{C}_{p,q}(K',Q,\cdot)$.	
	
	Similarly, if $p=0$, 
	\begin{equation*}
		g(t) = \frac{1}{|\mu|} \int _{S^{n-1}}\log (h_{K}+tf) d\mu - \frac{1}{q} \log \widetilde{V}_{q}([h_{K}+tf], Q),
	\end{equation*}
then	 we have
	
	\begin{equation*}
		0=\left.\frac{d}{d t}\right|_{t=0}	g(t)=\frac{1}{|\mu|}\int_{S^{n-1}}\frac{f}{h_{K}}d\mu-\int_{S^{n-1}}\frac{f}{h_{K}}d\widetilde{C}_{q}(K,Q).
	\end{equation*}
	In other words,
	\begin{equation}
		\int_{S^{n-1}}\phi d\mu=|\mu| \int_{S^{n-1}}\phi d\widetilde{C}_{q}(K,Q)
	\end{equation}
	for any  continuous $G$-invariant function $\phi$. From Lemma \ref{measure G invariant} and Lemma \ref{unique}, we obtain $\mu=|\mu|\widetilde{C}_{q}(K,Q,\cdot)=\widetilde{C}_{q}(|\mu|^{\frac{1}{q}}K,Q,\cdot)$.
\end{proof}

\vspace{1\baselineskip} 

\section{The $L_{p}$ Aleksandrov Problem}
The proof of the $L_p$ dual Minkowski problem for nonzero $q$ relies on the geometric characterization of irreducible groups, which reveals a profound relationship between irreducibility and Minkowski-type problems. In this section, we adopt an alternative perspective by analyzing the geometric symmetries of convex bodies invariant under irreducible group actions, which allows us to establish the existence of solutions to the $L_p$ Aleksandrov problem.

Let $G$ be an irreducible closed subgroup of $\On$ with $n\ge2$ and $Q$ a $G$-invariant star body in $\Rn$.
For any nonzero finite Borel measure $\mu$ on $\Sn$, if  $p \neq 0$, we introduce a functional $\mathcal{M}_{p}: C^{+}(\Sn) \to \R$ given by

\begin{equation}\label{Mp}
	\mathcal{M}_{p}(f) = \frac{1}{p} \log \int_{S^{n-1}} f^{p}  d\mu - \frac{1}{V(Q)}  \widetilde{E}([f], Q).
\end{equation}

If  $p=0$,  define 	

\begin{equation}\label{M0}
	\mathcal{M}_{0}(f) = \frac{1}{|\mu|} \int _{S^{n-1}}\log f  d\mu - \frac{1}{V(Q)}  \widetilde{E}([f], Q).
\end{equation}
Here $\widetilde{E}$ denotes the dual mixed entropy in  \eqref{entropy def} and $V(Q)$ represents the volume of $Q$.
Note that $\mathcal{M}_{p}$ is homogeneous of degree $0$, i.e., for all $\lambda >0$ and $f\in C^{+}(\Sn)$, we have 
\begin{equation}\label{homoM}
	\mathcal{M}_{p}(\lambda f) = \mathcal{M}_{p}(f)
\end{equation}
and
	\begin{equation}\label{supM}
		\mathcal{M}_{p}(h_{[f]})\le \mathcal{M}_{p}(f)
	\end{equation}
for any $p\in \mathbb{R}$.

	We next present a  result characterizing the geometry of irreducible groups. This helps us understand the symmetry inherent in the corresponding group-invariant convex bodies.

Ellipsoids are fundamental objects in convex geometry.
The
celebrated \textit{John ellipsoid}, which was introduced by John,
is an extremely useful tool in convex geometry and Banach space
geometry (see, e.g., \cite{LYZ2000,LYZ2005}). Associated with a convex body $ K $ in $ \mathbb{R}^{n} $,
the \textit{John ellipsoid} of $ K $ is the unique ellipsoid of
maximal volume contained in $ K $.  For $G$-invariant convex bodies corresponding to the irreducible group $G$, we have:

\begin{proposition}\label{john ellipsoid}
	Let $G $ be an irreducible subgroup of $\On$ with $n\ge 2$. Let $K \subset \mathbb{R}^n$ be a $G$-invariant convex body.  Then the John ellipsoid of $K$  is a ball centered at the origin.
\end{proposition}

\begin{proof}
	Let $E$ be the John ellipsoid of $K$.
	Since $G \subset \On$ preserves volume and $K$ is $G$-invariant, 	for any $g \in G $,  $gE$ is  an ellipsoid contained in $gK = K$ with $V(gE) = V(E)$. By the uniqueness of the John ellipsoid, we  have $gE = E$. Therefore $E$ is $G$-invariant. Its centroid is a fixed point of $G$ (see the proof of Proposition \ref{int1}), and since $G$ is irreducible and thus has no nonzero fixed points, the centroid must be the origin.
	Consequently, $E$ can be expressed as:  
	\begin{equation}\label{ellipsoid}
		E = \{ x \in \Rn \mid x^\top A x \leq 1 \}
	\end{equation}
	for some symmetric positive definite matrix $A \in \mathrm{GL}(n, \R)$. For any $g\in G$,  since $g^{-1} = g^\top$, we have
	\[
	gE = \{ gx \mid x^\top A x \leq 1 \} = \{ y \mid (g^{-1}y)^\top A (g^{-1}y) \leq 1 \} = \{ y \mid y^\top (gAg^{-1}) y \leq 1 \}.
	\]
	Equating $gE = E$ gives $gAg^{-1} = A$. 
	In other words, 
	\begin{equation}\label{commute}
		Ag = gA  \quad \text{for all} \quad g \in G.
	\end{equation}
	Thus, $A$ commutes with every element of $G$.

	Since $A$ is a symmetric positive definite matrix, its eigenvalues are real numbers. Let $\lambda$ be an eigenvalue of $A$, and $V_\lambda \subset \Rn$ its corresponding eigenspace:
	\[
	V_\lambda = \{ v \in \Rn \mid Av = \lambda v \}.
	\]
	For any $v \in V_\lambda$ and any $g \in G$, by \eqref{commute}, we have
	\[
	A(gv) = g(Av) = g(\lambda v) = \lambda (gv).
	\]
	This shows $gv \in V_\lambda$, so $V_\lambda$ is $G$-invariant.

	By the irreducibility of $G$, the only $G$-invariant subspaces of $\Rn$ are $\{0\}$ and $\Rn$. Since $V_\lambda \neq \{0\}$ (as $\lambda$ is an eigenvalue), we must have $V_\lambda = \Rn$. Thus, every vector in $\Rn$ is an eigenvector of $A$ with eigenvalue $\lambda$, implying $A = \lambda I$ for some $\lambda > 0$ (since $A$ is positive definite).

	Substituting $A = \lambda I$ into the equation  \eqref{ellipsoid}
	for $E$  yields:
	\[
	E = \{ x \in \Rn \mid x^\top (\lambda I) x \leq 1 \} = \{ x \in \Rn \mid \lambda |x|^2 \leq 1 \} = \left\{ x \in \Rn \bigm| |x| \leq \lambda^{-1/2} \right\},
	\]
	which is a ball centered at the origin with radius $\lambda^{-1/2}$. Therefore, the John ellipsoid of $K$ is a ball centered at the origin.
\end{proof}

This demonstrates the symmetry of group-invariant convex bodies for irreducible groups. Therefore, we have:

\begin{lemma}\label{uniM}
Let \(K\) be a \(G\)-invariant convex body in \(\mathbb{R}^n\) with  \(V(K) = 1\). Then there exist constants \(m, M > 0\) depending only on \(n\)
	such that
	\[
	mB 
	\subset K \subset
	MB.
	\]

\end{lemma}

\begin{proof}
	Since 
	\(G\) is irreducible and \(K\) is \(G\)-invariant, from Proposition \ref{john ellipsoid}, the John ellipsoid \(E\) of \(K\)  must be a ball centered at the origin. Let \(E = rB\) for some \(r > 0\). Then,
as is well known (see Schneider \cite{schneiderbook2014}, p.~588),
	\begin{equation}\label{john inclusion}
	rB 
	\subset K \subset
	nrB.
	\end{equation}
	
	 Since 
	\(V(K) = 1\), we have:
	\[
r^{n}V(B)=V(rB)\leq	1 = V(K) 
	\leq
	V(nrB) = (nr)^n V(B),
	\]
	which implies:
	\begin{equation}\label{raduis}
\frac{1}{n V(B)^{1/n}}\leq	r  	\leq \frac{1}{V(B)^{1/n}}.
	\end{equation}
	Combining \eqref{john inclusion} and \eqref{raduis}, we get:
	\[
	\frac{1}{n V(B)^{1/n}} B \subset rB \subset K \subset nrB \subset \frac
	{n}{V(B)^{1/n}} B.
	\]
	Taking 
	\(m = \frac{1}{n V(B)^{1/n}}\) and \(M = \frac{n}{V(B)^{1/n}}\), we obtain the desired uniform bounds:
	\[
	mB 
	\subset K \subset
	MB.
	\]
	
\end{proof}

Define the set $\mathcal{K}$ as:
\begin{equation}\label{L}
	\mathcal{K}=\{f\in C^{+}(\Sn): \text{$f$ is $G$-invariant and $V([f])=1$ }\}.
\end{equation}

We can then obtain the existence of solutions to the minimization problem for $\mathcal{M}_{p}$.

\begin{proposition}\label{min Mp}
	Let  $p\in\R$.	There exists a $G$-invariant convex body  $K$  such that 
	\begin{equation}
		\mathcal{M}_{p}(h_{K})=\min \{\mathcal{M}_{p}(f): f\in \mathcal{K}\}.
	\end{equation}
\end{proposition}

\begin{proof}
By \eqref{supM}, we can assume that $\{h_{K_{i}}\}\subset\mathcal{K}$ is a minimizing sequence such that 
	$$\lim_{i \to \infty}\mathcal{M}_{p}(h_{K_{i}})=\inf \{\mathcal{M}_{p}(f): f\in \mathcal{K}\}.$$
	From Lemma \ref{uniM},  there exist  constants $m, M>0$ such that
	\begin{equation} \label{subset}
	mB\subset	K_{i}\subset MB
	\end{equation}
	for all $i$.  Combining \eqref{subset} with the   Blaschke selection theorem, we can find a subsequence, still denoted by $\{K_{i}\}$, such that $K_{i}\rightarrow K$ for some  convex body $K$. Similar to Proposition \ref{min exsitence}, $K$ is $G$-invariant and $V(K) = 1$.
	Therefore, we have $h_{K}\in\mathcal{K}$. Hence, $$\lim_{i \to \infty}\mathcal{M}_{p}(h_{K_{i}})=\mathcal{M}_{p}(h_{K})=\min \{\mathcal{M}_{p}(f): f\in \mathcal{K}\}.$$ 
\end{proof}

We now establish the existence of solutions to the $L_p$ Aleksandrov problem.

\begin{theorem}\label{closed Aleksandrov}
	Let  $G$ be an  irreducible closed subgroup of $\mathrm{O}(n)$ with $n \geq 2$, and $Q$ be a $G$-invariant star body in $\mathbb{R}^n$. Let $\mu$ be a non-zero finite Borel measure  on $S^{n-1}$.
	\begin{enumerate}
		\item\label{item1} If $p\neq 0$, then	$\mu$ is $G$-invariant if and only if there exists a $G$-invariant convex body $K$ in $\mathbb{R}^n$ such that
		$\mu=\widetilde{C}_{p,0}(K, Q,\cdot)$.
		\item\label{item2} $\mu$ is $G$-invariant and $| \mu|=V(Q)$ if and only if there exists a $G$-invariant convex body $K$ in $\mathbb{R}^n$ such that
		$\mu=\widetilde{C}_{0}(K, Q,\cdot)$.
	\end{enumerate}

\end{theorem}

\begin{proof}
	Let $h_{K}$ be the minimizer of functional $\mathcal{M}_{p}$ in Proposition \ref{min Mp} with $V(K)=1$.
	By the homogeneity of $\mathcal{M}_{p}$ in \eqref{homoM}, $h_K$ is the global minimizer of $\mathcal{M}_{p}$.	For any continuous $G$-invariant function $f:\Sn\rightarrow \mathbb{R}$,   the function $\left( h_{K}+tf\right) $ belongs to $C^{+}(\Sn)$ and is $G$-invariant for sufficiently small $|t|$.  Let $s(t)=\mathcal{M}_{p}(h_{K}+tf)$ for sufficiently small $|t|$. Then we have
	\begin{equation}\label{Mt}
		s(t)=\mathcal{M}_{p}(h_{K}+tf)\ge \mathcal{M}_{p}(h_{K})=s(0).
	\end{equation}
	
	\textbf{Proof of \eqref{item1}}:
	If $p\neq 0$, 
	\begin{equation*}
		s(t) = \frac{1}{p} \log \int_{S^{n-1}} (h_{K}+tf)^{p}  d\mu - \frac{1}{V(Q)}  \widetilde{E}([h_{K}+tf], Q).
	\end{equation*}
	Therefore, by \eqref{Mt} and \eqref{entropy vari}, we have
	
	\begin{equation*}
		0=\left.\frac{d}{d t}\right|_{t=0}	s(t)=\dfrac{\int_{S^{n-1}}fh_{K}^{p-1}d\mu}{\int_{S^{n-1}}h_{K}^{p}d\mu}-\frac{1}{V(Q)}\int_{S^{n-1}}\frac{f}{h_{K}}d\widetilde{C}_{0}(K,Q).
	\end{equation*}
	In other words,
	\begin{equation}\label{==}
		\int_{S^{n-1}}fh_{K}^{p-1}d\mu=\lambda \int_{S^{n-1}}\frac{f}{h_{K}}d\widetilde{C}_{0}(K,Q)
	\end{equation}
	for $\lambda=\frac{\int_{S^{n-1}}h_{K}^{p}d\mu}{V(Q)}$. Therefore, for any continuous $G$-invariant function $\phi$, since $K$ is $G$-invariant, the function $f=\frac{\phi}{h_{K}^{p-1}}$ is also $G$-invariant. Substituting into \eqref{==}, we have
	\begin{equation}\label{subsM}
		\int_{S^{n-1}}\phi d\mu=\lambda \int_{S^{n-1}}\phi h_{K}^{-p}d\widetilde{C}_{0}(K,Q)=\lambda\int_{S^{n-1}}\phi d\widetilde{C}_{p,0}(K,Q).
	\end{equation}
	Then by \eqref{subsM}, Lemma \ref{measure G invariant}, and Lemma \ref{unique}, we obtain $\mu=\lambda \widetilde{C}_{p,0}(K,Q,\cdot)$.	
	 Defining $K' = \lambda^{-\frac{1}{p}} K$, we obtain 
	$\mu= \widetilde{C}_{p,0}(K',Q,\cdot)$.	
	
	\textbf{Proof of \eqref{item2}}: Similarly, if $p=0$, 
	\begin{equation*}
		s(t) = \frac{1}{|\mu|} \int _{S^{n-1}}\log (h_{K}+tf) d\mu - \frac{1}{V(Q)}  \widetilde{E}([h_{K}+tf], Q),
	\end{equation*}
	then	 we have
	
	\begin{equation*}
		0=\left.\frac{d}{d t}\right|_{t=0}	s(t)=\frac{1}{|\mu|}\int_{S^{n-1}}\frac{f}{h_{K}}d\mu-\frac{1}{V(Q)}\int_{S^{n-1}}\frac{f}{h_{K}}d\widetilde{C}_{0}(K,Q).
	\end{equation*}
	In other words,
	\begin{equation}
		\int_{S^{n-1}}\phi d\mu=\frac{|\mu|}{V(Q)} \int_{S^{n-1}}\phi d\widetilde{C}_{0}(K,Q)
	\end{equation}
	for any  continuous $G$-invariant function $\phi$. From Lemma \ref{measure G invariant} and Lemma \ref{unique}, we obtain $\mu=\frac{|\mu|}{V(Q)}\widetilde{C}_{0}(K,Q,\cdot)$. Therefore, if $|\mu|=V(Q)$, it follows naturally that $\mu=\widetilde{C}_{0}(K,Q,\cdot)$.
	
	Conversely, for any $K \in \mathcal{K}_{o}^{n}$, $Q\in \mathcal{S}_{o}^{n}$,   by the definition \eqref{defdual}, we have
	\begin{equation*}
		\widetilde{C}_{0}(K,Q,\Sn)=\frac{1}{n}\int_{\Sn}\rho_{Q}^{n}(u)du=V(Q).
	\end{equation*}
Therefore,	if there exists a $G$-invariant convex body $K$ in $\mathbb{R}^n$ such that
	$\mu=\widetilde{C}_{0}(K, Q,\cdot)$, it must hold that $|\mu|=V(Q)$.
\end{proof}

\section{Extension to Non-Closed Subgroups}\label{open}
In the previous sections, certain results required the subgroup $G$ to be closed. This section extends the previous results to general  subgroups. We begin with the following  lemma:
	 \begin{lemma}\label{ap1}
	 		Let $ G $ be a  subgroup of   $ \On $ and $x\in \Rn$, then $\overline{Gx}=\overline{G}x$, where $\clG$ denotes the closure of $G$ in $\On$.
	 \end{lemma}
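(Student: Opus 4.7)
The plan is to prove both inclusions $\overline{G}x \subset \overline{Gx}$ and $\overline{Gx} \subset \overline{G}x$ separately, leveraging two standard facts: continuity of the group action $(g,x) \mapsto gx$ on $\On \times \Rn$, and the compactness of $\On$ (hence of $\overline{G}$, as a closed subset of a compact set) that was already established in Section \ref{classification}.

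For the inclusion $\overline{G}x \subset \overline{Gx}$, I would take $y \in \overline{G}x$, so $y = gx$ for some $g \in \overline{G}$. By definition of the closure, there is a sequence $\{g_m\} \subset G$ with $g_m \to g$ in $\On$. Since matrix-vector multiplication is continuous, $g_m x \to gx = y$. Each $g_m x \in Gx$, so $y$ is a limit point of $Gx$, giving $y \in \overline{Gx}$.

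For the reverse inclusion $\overline{Gx} \subset \overline{G}x$, I would take $y \in \overline{Gx}$ and choose a sequence $\{g_m\} \subset G$ with $g_m x \to y$. Here the compactness of $\On$ is the crucial ingredient: $\{g_m\}$ admits a subsequence $\{g_{m_k}\}$ converging to some $g \in \On$. Since $g_{m_k} \in G$ for all $k$, the limit $g$ lies in $\overline{G}$. By continuity of the action, $g_{m_k} x \to gx$, and since $g_{m_k} x \to y$ as well, we conclude $y = gx \in \overline{G}x$.

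There is no serious obstacle; both directions follow immediately from the continuity of the action and the compactness of $\On$. The main point worth emphasizing is that the second direction genuinely uses that $\On$ is compact (a convergent subsequence of $\{g_m\}$ must exist), whereas the first direction holds for any topological group acting continuously on a Hausdorff space.
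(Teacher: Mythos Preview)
Your proposal is correct and follows essentially the same approach as the paper's proof: both directions are handled via continuity of the action and compactness of $\On$, with the key step being extraction of a convergent subsequence of $\{g_m\}$ in $\On$ to land in $\overline{G}$. The only cosmetic difference is that the paper treats the inclusion $\overline{Gx}\subset\overline{G}x$ first.
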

	 
	 \begin{proof}
	 	If $y\in \overline{Gx}$, there exists a sequence $\{g_{i}x\}$ with $g_{i}\in G$ such that $g_{i}x\rightarrow y$. Since $\On$ is compact, there exists a convergent subsequence of $\{g_{i}\}$, which we still denote by $\{g_{i}\}$, such that $g_{i}\rightarrow g \in \overline{G}$. Then $g_{i}x\rightarrow gx\in\overline{G}x$. As $g_{i}x\rightarrow y$, it follows that $y=gx\in\overline{G}x$.
	 	
	 	Conversely, if $z\in\overline{G}x$, $z=gx\in\overline{G}x$ for some $g\in\overline{G}$. Then there exist $g_{i}\in G$ such that $g_{i}\rightarrow g$. Since $g_{i}x \in Gx$ and $g_{i}x\rightarrow gx$, we have $z=gx\in\overline{Gx}$.
	 \end{proof}

	\begin{lemma}\label{ap2}
			Let $ G $ be a  subgroup of   $ \On $. The set of $G$-invariant convex bodies equals 
			the set of $\overline{G}$-invariant convex bodies, i.e., $\mathcal{K}_{G}=\mathcal{K}_{\overline{G}}$.
	\end{lemma}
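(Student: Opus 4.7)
The plan is to prove the two inclusions separately, with the forward inclusion being immediate and the reverse inclusion relying on the closedness of a convex body together with Lemma \ref{ap1}.

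First I would observe that $G \subset \overline{G}$, so any $\overline{G}$-invariant convex body is automatically $G$-invariant; this gives $\mathcal{K}_{\overline{G}} \subset \mathcal{K}_G$ with no argument needed.

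For the reverse inclusion $\mathcal{K}_G \subset \mathcal{K}_{\overline{G}}$, I would take $K \in \mathcal{K}_G$ and an arbitrary $g \in \overline{G}$, and aim to show $gK = K$. The key idea is that for any $x \in K$, the orbit $Gx$ lies in $K$, so (because $K$ is closed as a convex body) its closure $\overline{Gx}$ also lies in $K$; by Lemma \ref{ap1}, $\overline{Gx} = \overline{G}x$, hence $gx \in K$. This shows $gK \subset K$. Since $\overline{G}$ is still a subgroup of $\On$ (the closure of a subgroup in a topological group is a subgroup), we have $g^{-1} \in \overline{G}$, so the same argument yields $g^{-1}K \subset K$, i.e., $K \subset gK$. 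Combining gives $gK = K$, and since $g \in \overline{G}$ was arbitrary, $K \in \mathcal{K}_{\overline{G}}$.

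There is no substantive obstacle here; the only point that merits care is justifying that $\overline{G}$ is itself a subgroup of $\On$ (so that $g^{-1}$ is available in $\overline{G}$), which follows from the continuity of the group operations on the compact topological group $\On$. Alternatively, one can avoid invoking $\overline{G}$ being a group by picking $g_i \in G$ with $g_i \to g$ and noting $g_i^{-1} \to g^{-1}$ directly (continuity of inversion in $\On$), then applying the same closed-orbit argument to $g^{-1}$. Either route makes the lemma a short consequence of Lemma \ref{ap1} and the closedness of convex bodies.
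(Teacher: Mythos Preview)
Your proof is correct and follows essentially the same approach as the paper: both use the closedness of $K$ together with the density of $G$ in $\overline{G}$ to show $gx\in K$ for $g\in\overline{G}$ and $x\in K$. The only differences are cosmetic---you package the limit argument through Lemma~\ref{ap1} while the paper argues directly with a sequence $g_i\to g$, and you make the step from $gK\subset K$ to $gK=K$ explicit whereas the paper leaves it implicit.
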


\begin{proof}
	Since $G\subset \overline{G}$, $\overline{G}$-invariant convex bodies are automatically  $G$-invariant.

	Conversely, let $K$ be a $G$-invariant convex body. For any $g\in\overline{G}$,  there exists a sequence $\{g_{i}\}$ in $G$ such that $g_{i}\rightarrow g$. Since $K$ is $G$-invariant, for any $x\in K$, we have $g_{i}x\in K$. Since $K$ is compact, $g_i x \to gx\in K$. Thus $K$ is  $\overline{G}$-invariant.
\end{proof}

Similarly, if the star body $Q$ is $G$-invariant, then $Q$ is also $\overline{G}$-invariant.

	When the closedness restriction is removed, Propositions \ref{G=ball} and \ref{Gsub osym} no longer hold, as the set $K$ constructed in their proofs may not  be a convex body.  We now present a counterexample:
	
\begin{example}
		 Let $n = 2$ and $G$ be the subgroup of $\mathrm{SO}(2)$ generated by an irrational rotation. Specifically, fix an irrational rotation $\alpha$ ($\frac{\alpha}{\pi}\notin \mathbb{Q}$) and define $G = \{ R_{k\alpha} \mid k \in \mathbb{Z} \}$, where $R_\theta$ denotes the rotation matrix by angle $\theta$. Then $G$ is dense in $\mathrm{SO}(2)$.
		
		 Take $x = (1,0)$. The orbit $Gx = \{ (\cos(k\alpha), \sin(k\alpha)) \mid k \in \mathbb{Z} \}$ is dense in the unit circle, but $-x = (-1,0) \notin Gx$ (since $-x$ corresponds to angle $\pi$, and $k\alpha \equiv \pi \pmod{2\pi}$ has no integer solution; otherwise $\alpha$ would be a rational multiple of $\pi$).  Therefore, the action of $G$ on the circle is not transitive.
		
	  Since $G$ is dense in $\mathrm{SO}(2)$,  Lemma \ref{ap2} implies that a convex body is $G$-invariant if and only if it is $\mathrm{SO}(2)$-invariant.  The only $\mathrm{SO}(2)$-invariant convex bodies are  disks centered at the origin. This contradicts both Proposition \ref{G=ball} and Proposition \ref{Gsub osym}.
	\end{example}

	By making slight modifications to Propositions \ref{G=ball} and \ref{Gsub osym}, we have:
	
		\begin{proposition}\label{open G ball}
		Let \( G \) be a  subgroup of  \( \On \). The following conditions are equivalent:
		\begin{enumerate}
			\item $\KG=\mathcal{B}^{n}$.
			\item For any $v \in \Sn$, the orbit $Gv$ is dense in $\Sn$.
		\end{enumerate}
	\end{proposition}
	
	\begin{proof}
		\medskip\noindent
		\textbf{(2) \(\Rightarrow\) (1):} 
 For any $v\in\Sn$, since $\overline{Gv}=\Sn$,  Lemma \ref{ap1} implies that  $\overline{G}v=\overline{Gv}=\Sn$. Thus, $\overline{G}$  acts transitively on $\Sn$. By Proposition \ref{G=ball}, this implies that the only $\overline{G}$-invariant convex bodies are balls.   From Lemma \ref{ap2}, we conclude that $\KG = \mathcal{K}_{\overline{G}} = \mathcal{B}^{n}$.
		
		\medskip\noindent
		\textbf{(1) \(\Rightarrow\) (2):}  
	By Lemma \ref{ap2}, condition $(1)$  implies that every $\overline{G}$-invariant 
	convex body in \( \mathbb{R}^n \) is a Euclidean ball centered at the origin.  By Proposition \ref{G=ball},  for any $v\in\Sn$ we have $\overline{G}v=\Sn$. Then, by Lemma \ref{ap1}, we have $\overline{G}v = \overline{Gv}$, so $\overline{Gv}=\Sn$. In other words, the orbit $Gv$ is dense in $\Sn$.
		\end{proof}
		
		Similarly, we have:
		
			\begin{proposition} 
			Let \(G\) be a  subgroup of \(\On\). The following conditions are equivalent:
			\begin{enumerate}
				\item $\KG \subset \mathcal{K}^{n}_{e}$.
				\item For every  \(x \in \Rn \),  \(-x \in \overline{Gx}\).
			\end{enumerate}
		\end{proposition}
		
		Moreover, for any finite Borel measure, $G$-invariance implies $\overline{G}$-invariance:

			\begin{lemma}\label{mu bar}
				Let $G$ be a subgroup of  $\On$, and $\mu$ be a finite Borel measure on the unit sphere $\Sn$. If $\mu$ is $G$-invariant, then $\mu$ is $\clG$-invariant.
			\end{lemma}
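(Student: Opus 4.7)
The plan is to lift the measure-theoretic invariance condition $\mu(gE)=\mu(E)$ to a statement about integrals of continuous functions, use continuity of the $\On$-action on $\Sn$ to pass to the closure, and then descend back to measures via the Riesz representation theorem.

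First, I would observe that $G$-invariance of $\mu$ is equivalent to the integral identity
\[
\int_{\Sn} f(gx)\,d\mu(x)=\int_{\Sn} f(x)\,d\mu(x)
\]
for every $g\in G$ and every $f\in C(\Sn)$. Indeed, for $f=\mathbf{1}_E$ one has $\int \mathbf{1}_E(gx)\,d\mu(x)=\mu(g^{-1}E)=\mu(E)$ (using that $g^{-1}\in G$), and the identity then extends to simple functions, bounded Borel functions, and in particular to continuous functions on the compact space $\Sn$.

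Second, for fixed $f\in C(\Sn)$ I would define $F:\On\to\R$ by $F(g)=\int_{\Sn}f(gx)\,d\mu(x)$ and show it is continuous. If $g_i\to g$ in $\On$, then $g_i x\to gx$ for every $x\in\Sn$, so by continuity of $f$, $f(g_i x)\to f(gx)$ pointwise. Since $|f(g_i x)|\le\|f\|_\infty$ and $\mu$ is finite, the dominated convergence theorem gives $F(g_i)\to F(g)$.

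Third, by the first step $F$ is constantly equal to $\int_{\Sn}f\,d\mu$ on $G$, so by continuity $F$ is constant on $\clG$. Thus for every $g\in\clG$ and every $f\in C(\Sn)$,
\[
\int_{\Sn}f(gx)\,d\mu(x)=\int_{\Sn}f(x)\,d\mu(x).
\]
Finally, the two finite Borel measures $E\mapsto\mu(gE)$ and $E\mapsto\mu(E)$ on the compact metric space $\Sn$ assign the same integral to every continuous function, hence coincide by the Riesz representation theorem (equivalently, uniqueness of regular Borel measures determined by their action on $C(\Sn)$). This yields $\mu(gE)=\mu(E)$ for all Borel $E$ and $g\in\clG$, i.e., $\mu$ is $\clG$-invariant. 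There is no serious obstacle; the only point requiring a little care is the equivalence between the set-theoretic invariance and the integral version, which is standard for finite regular Borel measures on a compact metric space.
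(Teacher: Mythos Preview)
Your proof is correct and follows essentially the same approach as the paper: both arguments pass from set-invariance to the integral identity $\int f(gx)\,d\mu=\int f\,d\mu$ for continuous $f$, use continuity of the action together with dominated convergence to extend this to all $g\in\clG$, and then invoke uniqueness of finite Borel measures determined by their integrals against $C(\Sn)$ (the paper phrases this last step as uniqueness of weak limits of the pushforwards $g_{k*}\mu$). The only cosmetic point is that your displayed identity directly yields $g_*\mu=\mu$, i.e.\ $\mu(g^{-1}E)=\mu(E)$, and one uses that $\clG$ is a subgroup to rewrite this as $\mu(gE)=\mu(E)$.
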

			
			\begin{proof}
				We need to show that $\mu(hE) = \mu(E)$ for all $h \in \clG$ and all Borel sets $E \subset \Sn$.

				Since $\On$ is a compact Lie group under the Euclidean topology of $\R^{n^2}$, 
				$\clG$ is  a closed subgroup. The $G$-invariance of $\mu$ means that $g_*\mu = \mu$ 
				for all $g \in G$, where the pushforward measure $g_*\mu$ is defined by $g_*\mu(E) = \mu(g^{-1}E)$.

			Fix $h \in \clG$.		Let $\{g_k\} \subset G$ be a sequence converging to $h \in \clG$. For any  continuous function $f: \Sn \to \R$, we have:
				\[
				\int_{\Sn} f(x)  dg_{k*}\mu(x) = \int_{\Sn} f(g_k x)  d\mu(x), 
				\]
				and
				\[
				\int_{\Sn} f(x)  dh_*\mu(x) = \int_{\Sn} f(h x)  d\mu(x).
				\]
				Since $g_k \to h$ and the action is continuous, $g_k x \to h x$ for each $x \in \Sn$. By continuity of $f$, we have $f(g_k x) \to f(h x)$ pointwise. As $f$ is bounded  and $\mu$ is finite, the Dominated Convergence Theorem implies:
				\[
				\lim_{k \to \infty} \int_{\Sn} f(g_k x)  d\mu(x) = \int_{\Sn} f(h x)  d\mu(x).
				\]
				Thus $g_{k*}\mu \to h_*\mu$ weakly.

				 By $G$-invariance, $g_{k*}\mu = \mu$ for all $k$. The constant sequence $\{\mu\}$ converges weakly to $\mu$, so by uniqueness of weak limits:
				\[
				h_*\mu = \mu.
				\]
				Hence, $\mu(h^{-1}E) = h_*\mu(E) = \mu(E)$ for all Borel sets $E \subset \Sn$, which shows that $\mu$ is $\clG$-invariant.
			\end{proof}
			
			Therefore, for general subgroups (without the closedness restriction) of $\On$, we can still obtain solutions to the corresponding group-invariant $L_{p}$ dual Minkowski problem.
			
			\begin{theorem}\label{general exist}
				Let $q\in\mathbb{R}$, $p \in \mathbb{R}$, $G$ be an irreducible subgroup of $\mathrm{O}(n)$ with $n \geq 2$, and $Q$ be a $G$-invariant star body in $\mathbb{R}^n$. For any non-zero finite Borel measure $\mu$ on $S^{n-1}$, $\mu$ is $G$-invariant if and only if there exists a $G$-invariant convex body $K$ in $\mathbb{R}^n$ such that
			\[
			\mu = \widetilde{C}_{p,q}(K, Q, \cdot) \quad \text{when  $p \neq q$},
			\]
			and
			\[
			\mu = \lambda \widetilde{C}_{p,q}(K, Q, \cdot) \quad \text{for some $\lambda>0$ when $p=q$}.
			\]
			\end{theorem}
			
			\begin{proof}
				If $\mu$ is $G$-invariant, by Lemma \ref{mu bar},  $\mu$ is $\overline{G}$-invariant. Since $G$ is irreducible and $G\subset \overline{G}$, the closure $\overline{G}$ is also irreducible. Moreover, since $Q$ is $G$-invariant, it is also $\overline{G}$-invariant. Thus, by Theorem \ref{closed minkowski} and Theorem \ref{closed Aleksandrov}, there exists a $\overline{G}$-invariant convex body $K$ such that
					\[
				\mu = \widetilde{C}_{p,q}(K, Q, \cdot) \quad \text{when $p \neq q$},
				\]
				and
				\[
				\mu = \lambda \widetilde{C}_{p,q}(K, Q, \cdot) \quad \text{for some $\lambda>0$ when $p=q$}.
				\]
				From Lemma \ref{ap2}, $K$ is also $G$-invariant, then the result is proved.
			\end{proof}
			
	When $n=1$, the $L_p$ dual Minkowski problem simplifies considerably. For $\mathrm{O}(1)$, since $\R$ has no nontrivial subspaces, its subgroups $\{\pm 1\}$ and $\{1\}$ are both irreducible. 
	\begin{itemize}
	\item	When $G=\{\pm 1\}$, let $Q=[-b, b]$ for $b>0$.
	For a  finite Borel measure $\mu$ on $S^{0}$, $G$-invariant implies $\mu(1)=\mu(-1)$. Let $K = [-a, a]$, then the Minkowski problem reduces to solving for  $a$. The equation $\mu = \widetilde{C}_{p,q}(K, Q, \cdot)$ is equivalent to
	$$\mu (1)=h_{[-a, a]}^{-p}(1)\rho_{[-a, a]}^{q}(1)\rho_{[-b, b]}^{1-q}(1)=a^{q-p}b^{1-q}.$$
	If $p\neq q$, then $a=(\frac{\mu(1)}{b^{1-q}})^{\frac{1}{q-p}}$, and $\mu = \widetilde{C}_{p,q}([-a,a], [-b, b], \cdot)$;
	If $p=q$, then $\mu = \lambda \widetilde{C}_{p,q}([-1,1], [-b, b], \cdot)$ for $\lambda=\frac{\mu(1)}{b^{1-q}}$.
	
	\item When $G=\{ 1\}$, let $Q=[-a, b]$ for $a, b>0$.
	For a  finite Borel measure $\mu$ on $S^{0}$, let $K = [-c, d]$; then the Minkowski problem reduces to solving for  $c$ and $d$. The equation $\mu = \widetilde{C}_{p,q}(K, Q, \cdot)$ is equivalent to
	$$\mu (1)=d^{q-p}b^{1-q}, \quad \mu (-1)=c^{q-p}a^{1-q}. $$
	If $p\neq q$, then $d=(\frac{\mu(1)}{b^{1-q}})^{\frac{1}{q-p}}$ and $c=(\frac{\mu(-1)}{a^{1-q}})^{\frac{1}{q-p}}$, and $\mu = \widetilde{C}_{p,q}([-c,d], [-a, b], \cdot)$.
	If $p=q$, the equation $\mu = \lambda \widetilde{C}_{p,q}(K, Q, \cdot)$
	 may have no solution.
	\end{itemize}
	
We now present a simple application of the existence theorem for solutions to the $\Lp$ dual Minkowski problem.

	\begin{example}
	Let $G$ be a subgroup of $\On$	such that for every $v \in \Sn$, the orbit $Gv$ is dense in $\Sn$. 
	Such a group $G$ is clearly irreducible. By Proposition \ref{open G ball}, every $G$-invariant convex body is a ball.  Let $Q = r_1 B$ be a $G$-invariant star body and $\mu$ a $G$-invariant finite Borel measure on $\Sn$. We seek a solution of the form $K = rB$. The $L_{p}$ dual Minkowski problem reduces to solving for  $r$. The equation $\mu = \widetilde{C}_{p,q}(K, Q, \cdot)$ is equivalent to
	\begin{equation}\label{application}
	\mu(\omega)=\frac{1}{n}\mathcal{H}^{n-1}(\omega)r_{1}^{n-q}r^{q-p}
\end{equation}
for any Borel set $\omega\subset \Sn$.	By Theorem \ref{general exist}, for $p\neq q$,  there exists a solution $r > 0$ satisfying equation \eqref{application}.   Moreover, from \eqref{application}, we conclude  that if $\mu$ is a finite $G$-invariant Borel measure where $G$ has dense orbits on $\Sn$, then $\mu$ must be a constant multiple of the spherical Hausdorff measure, i.e., $\mu = c \mathcal{H}^{n-1}$ for some $c > 0$.
	\end{example}

\end{document}